\newcommand\eps{\varepsilon}
\newcommand\Prb{\mathbb{P}}
\newcommand\N{\mathbb{N}}
\newcommand\R{\mathbb{R}}
\renewcommand{\le}{\leqslant}
\renewcommand{\ge}{\geqslant}
\title{Subtended Angles}
\author[P.~Balister]{Paul Balister}
\address{Department of Mathematical Sciences, University of Memphis, Memphis TN 38152, USA.}
\email{pbalistr@memphis.edu}
\author[B.~Bollob\'as]{B\'ela Bollob\'as}
\address{Department of Pure Mathematics and Mathematical Statistics,
Wilberforce Road, Cambridge CB3\thinspace0WB, UK; {\em and\/}
Department of Mathematical Sciences, University of Memphis, Memphis TN 38152,
USA; {\em and\/} London Institute for Mathematical Sciences, 35a South St.,
Mayfair, London W1K\thinspace2XF, UK.}
\email{bollobas@dpmms.cam.ac.uk}
\thanks{The first and second author are partially supported by
NSF grant DMS~1301614 and MULTIPLEX no.\ 317532.}
\author[Z.~F\"uredi]{Zolt\'an F\"uredi}
\address{R\'enyi Institute of Mathematics, Budapest, Hungary.}
\email{z-furedi@illinois.edu}
\thanks{The third author's research supported in part by the Hungarian
  National Science Foundation OTKA 104343, by the Simons Foundation
  Collaboration Grant \#317487, and by the European Research Council
  Advanced Investigators Grant 267195.}
\author[I.~Leader]{Imre Leader}
\address{Department of Pure Mathematics and Mathematical Statistics,
Wilberforce Road, Cambridge CB3 0WB, UK.}
\email{I.Leader@dpmms.cam.ac.uk}
\author[M.~Walters]{Mark Walters}
\address{School of Mathematical Sciences, Queen Mary University of London, London E1 4NS, UK.}
\email{\tt  m.walters@qmul.ac.uk}
\subjclass[2000]{51M16; 05D40}
\begin{document}

\newtheorem{theorem}{Theorem}
\newtheorem{lemma}[theorem]{Lemma}
\newtheorem{proposition}[theorem]{Proposition}
\newtheorem{corollary}[theorem]{Corollary}
\newtheorem*{defn}{Definition}
\newtheorem*{theorem*}{Theorem}
\newtheorem*{lemma*}{Lemma}
\newtheorem*{quotedresult}{Lemma}
\newtheorem*{conjecture}{Conjecture}
\newtheorem{question}{Question}
\theoremstyle{remark}
\newtheorem*{remark}{Remark}

\begin{abstract}
We consider the following question. Suppose that $d\ge2$ and $n$ are
fixed, and that $\theta_1,\theta_2,\dots,\theta_n$ are $n$ specified
angles. How many points do we need to place in $\R^d$ to realise all
of these angles?

A simple degrees of freedom argument shows that $m$ points in $\R^2$
cannot realise more than $2m-4$ general angles.  We give a
construction to show that this bound is sharp when $m\ge 5$.

In $d$ dimensions the degrees of freedom argument gives an upper bound
of $dm-\binom{d+1}{2}-1$ general angles. However, the above result
does not generalise to this case; surprisingly, the bound of $2m-4$
from two dimensions cannot be improved at all. Indeed, our main result
is that there are sets of $2m-3$ of angles that cannot be realised by
$m$ points in any dimension.
\end{abstract}

\maketitle

\section{Introduction}
We consider the following question. Suppose that $d\ge2$ and $n$ are
fixed, and that $\theta_1,\theta_2,\dots,\theta_n$ are $n$ specified
angles. How many points do we need to place in $\R^d$ to realise all
of these angles? (We say an angle $\theta$ is \emph{realised} if there
exist points $A$, $B$ and $C$ such that $A\widehat BC=\theta$. To
avoid trivial cases we assume that all angles lie strictly between $0$
and $\pi$.)

There is a natural `degrees of freedom' argument. Suppose we place $m$
points. Then there are $d$ degrees of freedom for each point, so $dm$
in total, except we have to consider all similarities of $\R^d$. There
are $d$ degrees for translation, one for scaling, and then
$\binom{d}{2}$ for the orthogonal group of isometries. Thus in total
there are $dm-\binom{d+1}{2}-1$ degrees of freedom, and we cannot hope
to realise more than this many angles in general. (We give a formal
proof of this fact in Section~\ref{s:2d-lower}.)

However, it is far from clear that we can realise this many, although
one might guess that they can be realised, at least for large
$n$. Indeed, this is the case in two dimensions.
\begin{theorem}\label{t:2d-opt}
  Suppose that $m\ge 5$ and $n\le 2m-4$. Then, given any $n$ distinct
  angles, there is an arrangement of $m$ points in the plane realising
  all $n$ of these angles. Moreover, these points may
    be chosen in convex position.
\end{theorem}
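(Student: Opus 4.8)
The plan is to build the configuration so that $2m-4$ of the angles it determines can be prescribed more or less independently, matching the degrees‑of‑freedom heuristic from the introduction. First one reduces to the case $n=2m-4$: since $(0,\pi)$ contains infinitely many angles, we may enlarge $\theta_1,\dots,\theta_n$ to a set of exactly $2m-4$ distinct angles, so from now on there are exactly $2m-4$ targets to realise.

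For the construction I would use a \emph{convex fan}. Put $P_1$ at the origin, and place $P_2,\dots,P_m$ so that, seen from $P_1$, they occur in this angular order, with $P_2,\dots,P_m$ forming a convex chain lying on one side of $P_1$, so that $P_1,P_2,\dots,P_m$ are the successive vertices of a convex polygon. Fixing $P_1$ (translation) and $P_2$ (scaling and rotation) uses up the similarity group, and the positions of $P_3,\dots,P_m$ then contribute exactly $2(m-2)=2m-4$ real parameters. As the $2m-4$ output angles I would take, for $i=2,\dots,m-1$, the apex angle $\angle P_iP_1P_{i+1}$ together with $\angle P_1P_iP_{i+1}$. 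On a suitable open region of fan‑configurations the map from parameters to this angle‑tuple is a diffeomorphism onto an open box: fixing the apex angles fixes the directions of the diagonals $P_1P_i$, and then for each $i$ the angle $\angle P_1P_iP_{i+1}$ is governed by the single remaining degree of freedom, the ratio $|P_1P_{i+1}|/|P_1P_i|$, these choices being independent across $i$. Hence every tuple lying in that box is realised by $m$ points in strict convex position.

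The obstacle — and the genuinely hard part — is that this box is \emph{not} all of $(0,\pi)^{2m-4}$: the apex angles $\angle P_iP_1P_{i+1}$ sum to the interior angle at $P_1$, hence to less than $\pi$, and more generally any construction in which a point's position is pinned by two angles of a single triangle with fixed vertices inherits a ``two angles sum to less than $\pi$'' restriction. Consequently one cannot simply assign $\theta_j$ to slot $j$, and the delicate regimes are when many of the targets are clustered near $\pi$, or near $0$. To handle this I would (i) choose the assignment of $\theta_1,\dots,\theta_{2m-4}$ to slots by a bin‑packing‑type argument, putting the large targets into the less constrained family $\{\angle P_1P_iP_{i+1}\}$ and small fillers into the apex slots; and (ii) when even this fails, because there are more large targets than a single convex polygon's angles can carry, split the vertices into a ``flat'' block realising the large targets as interior angles of a nearly degenerate convex polygon and a ``sharp'' block realising the remaining targets, and glue the two blocks together so that the union is still convex and every realised angle survives. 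A symmetric device handles a cluster of targets near $0$.

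It remains to note that strict convex position is preserved throughout, which is automatic from the open, perturbative nature of the construction, and to dispose of the base case $m=5$ by an explicit configuration; the hypothesis $m\ge5$ is exactly what guarantees enough slots of each type for the assignment step to go through. The main difficulty, in short, is not the degrees‑of‑freedom count but organising the case analysis so that every possible ``shape'' of the target set $\{\theta_1,\dots,\theta_n\}$ is covered while keeping the points convex.
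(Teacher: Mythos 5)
Your degrees-of-freedom fan correctly identifies the central obstruction --- that parametrising each new point by two angles of a triangle with a fixed base forces those angles to sum to less than $\pi$ --- but the proposal stops exactly where the work begins, and the devices you sketch for overcoming it do not suffice as described. Concretely: if all $2m-4$ targets lie in $(\pi-\eps,\pi)$, your fan fails outright, since the $m-2$ apex slots at $P_1$ have angles summing to less than $\pi$ and so can absorb at most one large target, while each base slot $\angle P_1P_iP_{i+1}$ needs its paired apex angle to be near $0$; and your fallback --- realising the large targets as interior angles of a nearly degenerate convex polygon --- yields only about one such angle per point (a convex $k$-gon has interior angle sum $(k-2)\pi$), whereas you need two per point. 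The bin-packing assignment and the block-gluing with ``every realised angle survives'' are asserted rather than carried out, and it is precisely this case analysis that would constitute the proof.

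The paper sidesteps the obstruction with an idea your parametrisation misses. A five-point base configuration (Lemma~\ref{l:5points-2d}) realises the six smallest angles together with the maximum angle $\theta_1=A\widehat BC$; then each further point $D$ is placed on a ray from $B$ interior to the angle $A\widehat BC$, realising one new target as $A\widehat BD$ (any value below $\theta_1$, hence any remaining target) and a second as $A\widehat DC$, the angle \emph{subtended at $D$} by the fixed segment $AC$. As $D$ slides along this ray from its intersection with $AC$ out to infinity, the subtended angle sweeps through all of $(0,\pi)$, so the second angle is completely unconstrained: no ``sum less than $\pi$'' condition arises and no case analysis on the shape of the target set is needed. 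Convex position is then secured by a small explicit modification (inserting the new rays between $CA$ and $CB$ and bounding where the new point can sit), not by a perturbative openness argument. Without an analogue of this subtended-angle freedom, your outline has a genuine gap in exactly the regimes (targets clustered near $0$ or $\pi$) that you flag as the hard part.
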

It is trivial to see that, even without the convex condition, the
theorem does not hold for $m = 3$: the configuration must be a
triangle so two angles can be realised if and only if their sum is
less than $\pi$. Thus, if the angles are chosen independently from the
uniform distribution on $(0,\pi)$, the probability of realising two
angles is $1/2$. F\"uredi and Szigeti~\cite{FurSzi} showed that the
probability of realising four independent uniform $(0,\pi)$ angles by
four points is $79/84$. This is less than~$1$, so
Theorem~\ref{t:2d-opt} is also false for $m = 4$.  One can also check
that it is impossible for four points to represent four distinct
angles $\theta_1>\theta_2>\theta_3>\theta_4$ when $\theta_4>(2/3)\pi$
and $\theta_2+\theta_3>\pi +\theta_4$, even in three dimensions.

Having seen that degrees of freedom gives the correct answer in two
dimensions, it is natural to guess that the same is true in higher
dimensions. Somewhat surprisingly this is not the case: it is not
possible to guarantee to realise \emph{any} more angles than in two
dimensions. Indeed, there are sets of $2m-3$ angles that cannot be
realised by $m$ points in any dimension.

\begin{theorem}\label{t:main-hd}
  Suppose that $m\ge 2$ and that $n= 2m-3$. Then there exists a set of
  $n$ (distinct) angles such that no arrangement of $m$ points in any
  dimension realises all angles in the set.
\end{theorem}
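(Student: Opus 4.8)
The plan is to produce an explicit family of $2m-3$ angles and show by induction on $m$ that no point set realises all of them. Two reductions come first. Since $m$ points span an affine subspace of dimension at most $m-1$ and angles are unchanged under restriction to it, it suffices to rule out configurations in $\R^d$ with $2\le d\le m-1$. Secondly, the only angle identity valid in \emph{every} dimension is the triangle identity: for non-collinear $A,B,C$ the three angles of the triangle $ABC$ sum to $\pi$. View an $m$-point configuration as carrying $m\binom{m-1}{2}$ angle slots partitioned into $\binom m3$ triples; the three slots of a triple sum to $\pi$, a collinear triple realises none of our angles (these lie strictly in $(0,\pi)$), and hence if a realisation puts two of the prescribed angles inside one triple their sum is $<\pi$, and if three then their sum is exactly $\pi$.

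For small $m$ this already suffices with the right angles: for $m=2$ there is nothing to do; for $m=3$ pick three distinct angles whose sum is not $\pi$; for $m=4$ pick five distinct angles each $>\pi/2$, so that no two can share a triple, forcing the five into five distinct triples among four points, of which there are only $\binom 43=4$. The natural attempt for general $m$ is again to take the $2m-3$ angles close to $\pi$; then no two can share a triple, so a realisation selects $2m-3$ distinct triples, and in each the prescribed angle sits at a vertex while the other two angles are tiny — the triple is ``thin'', its apex lying, to leading order, on and between the endpoints of the opposite side.

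The heart of the argument would then be: (i) a chain of thin triples sharing pairs of points forces those points to be nearly collinear, so after using the induction to peel off points whose thin triples meet the rest only in a single vertex, one reduces to all $m$ points lying within $o(1)$ of one line; (ii) in this near-collinear regime, rescale the transverse coordinates and expand — the deficits $\pi-\theta_i$ become, to leading order, norms $|w_i|$ of vectors $w_i$ linear in the rescaled transverse positions, and these $2m-3$ vectors are constrained to an $(m-2)$-dimensional subspace determined by the longitudinal positions; (iii) hence the attainable vectors $(|w_i|^2)_i$ form, for each choice of longitudinal positions and of the $2m-3$ triples, a cone of dimension at most $\binom{m-1}{2}$ inside the positive orthant of $\R^{2m-3}$, which one analyses (via semidefinite duality, writing $|w_i|^2=\ell_i^{\mathsf T}P\ell_i$ for a positive semidefinite Gram matrix $P$) to show is a proper subcone, and then chooses the $\theta_i$ — all just below $\pi$, with the vector of deficits pointing outside every such cone — to get the contradiction.

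The main obstacle is step (iii) together with the precise choice of angles: one must show that the union, over all admissible longitudinal configurations, all selections of $2m-3$ triples, and all dimensions, of these cones of attainable deficit vectors still misses some positive direction. A degrees-of-freedom count does not help here, since for $d\ge 3$ the near-collinear regime already has more than $2m-3$ effective parameters, so a genuine rank/rigidity input is needed. A secondary difficulty is the peeling in step (i): deleting a lightly-used point fails by exactly one angle (the $3(2m-3)$ incidences over $m$ points only guarantee some point in $\le 5$ of the angles, not $\le 2$), so the inductive hypothesis will probably have to be strengthened to track finer structural data about how the realisable angles are distributed among the points.
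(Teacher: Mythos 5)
Your outline is not a proof: the two places you flag as ``the main obstacle'' and ``a secondary difficulty'' are in fact the entire content of the theorem, and neither is resolved. Concretely, step (iii) asks you to show that the union, over all longitudinal configurations, all selections of $2m-3$ triples, and all ambient dimensions, of the cones of attainable deficit vectors misses some positive direction. Nothing in the sketch establishes this. The dimension count you cite does not close it: $\binom{m-1}{2}\ge 2m-3$ already for $m\ge 6$, so each individual cone may be full-dimensional in $\R^{2m-3}$, and you are taking a union over a continuum of longitudinal positions, which could easily cover a neighbourhood of every positive direction. The proposed semidefinite-duality analysis of $|w_i|^2=\ell_i^{\mathsf T}P\ell_i$ is exactly the hard rigidity statement one would need, and it is left entirely open. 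The peeling argument in step (i) has the gap you yourself identify (a point can lie in up to five of the prescribed angles), and the ``strengthened inductive hypothesis'' that would repair it is not formulated. Also note that the linearisation in step (ii) only controls the leading order of the deficits; for a clean contradiction you would additionally need uniform control of the error terms over all near-collinear configurations, which is another missing ingredient.

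The paper avoids all of this with a soft argument that never needs to understand the geometry of extremal configurations. First, any $2m-3$ of the quantities $\sin^2\theta_i$ arising from $m$ points in the \emph{plane} are rational functions of only $2m-4$ coordinates, so by transcendence degree they satisfy a nontrivial polynomial relation $g=0$ (Lemma~\ref{l:poly}). Second, by ordering the monomials of $g$ and choosing the target angles on wildly separated scales (each $\eps_{i}$ much smaller than an explicit function of $\eps_1,\dots,\eps_{i-1}$), one forces the leading term of $g$ to dominate all others on a whole product of intervals, so $g\ne 0$ there and no planar realisation exists even after each angle is perturbed within its interval (Theorem~\ref{t:2D-not-dense}). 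Third, a uniformly random projection onto a plane distorts each small angle by at most a bounded multiplicative factor except with probability $O(\sqrt\delta)$ (Lemma~\ref{l:project-angle}), so a union bound produces a projection landing all $2m-3$ angles inside their forbidden intervals, reducing arbitrary dimension to the planar case. Your instinct to use extreme angles (near $\pi$ rather than near $0$, which is equivalent here since everything depends on $\sin^2$) and to reduce to low dimension is in the right spirit, but the robust planar impossibility statement and the random projection are the ideas your sketch is missing, and without them the approach as described cannot be completed.
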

This theorem says that we could not guarantee to achieve more than
$2m-4$ general angles with $m$ points even in arbitrarily large
dimension. Our example of an unachievable set contains angles very
close to either $0$ or~$\pi$. Thus, it is natural to ask if we can do
better if the angles are bounded away from $0$ and $\pi$. Of course,
the degrees of freedom bound will still hold so there will be no
change in two dimensions but, in higher dimensions, there might
be. Our final result shows that constraining the angles away from $0$
and $\pi$ makes a huge difference: in this case we can nearly obtain
the degrees of freedom bound in any dimension.

\begin{theorem}\label{t:away-from-zero}
  Suppose that $d$ and $\eps$ are fixed. Then there exists a
  constant~$c$ such that any $n=dm-c$ angles, all lying between $\eps$
  and $\pi-\eps$, can be realised using $m$ points.
\end{theorem}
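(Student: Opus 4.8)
The plan is to build the required configuration from a \emph{base} $\cB=\cB(d,\eps)\subset\R^d$ of bounded size, together with $m-|\cB|$ further points, each responsible for realising $d$ of the prescribed angles; the constant will then be $c=d\,|\cB|$. In this way the whole problem reduces to a one-point statement: \emph{for every $d$ and $\eps$ there is a finite set $\cB\subset\R^d$, in suitably general position, such that for every tuple $(\theta_1,\dots,\theta_d)\in(\eps,\pi-\eps)^d$ there exist $P\notin\cB$ and pairs $A_i,B_i\in\cB$ with $A_i\widehat{P}B_i=\theta_i$ for $i=1,\dots,d$.} Granting this, write $n=dm-c$; we may assume $n>0$, so $m>|\cB|$, and since $d\mid n$ we split the given angles into $m-|\cB|$ blocks of $d$. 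Placing the base and then adding one point per block realising that block's $d$ angles, we realise all $d(m-|\cB|)=n$ of the angles --- adding a later point never disturbs the angles already realised, since those involve only $\cB$ and earlier points. Finally, since for each target the one-point problem has only finitely many solutions $P$ and a slightly perturbed base still works, we may choose the solutions so that the $m-|\cB|$ added points are pairwise distinct and avoid $\cB$.

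For the one-point lemma, fix $\cB=\{Q_0,Q_1,\dots,Q_d\}$ and, after relabelling $Q_1,\dots,Q_d$ according to the target, look for $P$ with $Q_0\widehat{P}Q_i=\theta_i$ for all $i$. Each condition cuts out a hypersurface in $\R^d$ --- a piece of the quartic hypersurface $[(Q_0-P)\cdot(Q_i-P)]^2=\cos^2\theta_i\,|Q_0-P|^2|Q_i-P|^2$, a surface of revolution about the line $Q_0Q_i$ --- so for fixed $\theta$ one is intersecting $d$ hypersurfaces in $\R^d$ and expects a finite solution set. One can either eliminate variables (intersect $d-1$ of the hypersurfaces to get a real algebraic curve, then restrict the last equation to it, obtaining a single analytic equation in one variable whose admissible roots are the sought points), or argue by induction on the number of angles realised, showing at each step that $Q_0\widehat{P}Q_j$ attains its target value on a connected piece of the bounded solution surface for the previous $j-1$ angles. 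The hypothesis $\theta_i\in(\eps,\pi-\eps)$ is crucial here: a point far from $\cB$ sees every pair of base points under an angle close to $0$, so any solution with all angles at least $\eps$ lies in a compact region depending only on $\cB$ and $\eps$; hence all the relevant loci are bounded, and with $\cB$ chosen generically (and using the freedom to relabel) one can prevent solutions from escaping as the target ranges over the compact box $[\eps,\pi-\eps]^d$. Boundedness, together with transversality at a generic target and a connectedness or degree argument, should then upgrade ``solvable for targets near a convenient model'' to ``solvable for every target in the box''.

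The routine parts are this reduction and the bookkeeping in the assembly. The real obstacle --- and the only place genuine geometric input is required --- is exhibiting a \emph{single fixed} base $\cB$ for which the one-point problem is solvable for \emph{every} $d$-tuple in $[\eps,\pi-\eps]^d$, not just for tuples near some convenient configuration (where solvability would follow from a degrees-of-freedom count). Making this uniform over the box forces one to exploit the specific geometry of $\cB$ --- its overall scale, and the way $Q_1,\dots,Q_d$ are spread relative to $Q_0$ --- in order to control the orientation/sign conditions that are lost when the angle relations are squared; it is precisely here that compactness, i.e.\ the assumption $\eps>0$, cannot be dispensed with, consistently with Theorem~\ref{t:main-hd}.
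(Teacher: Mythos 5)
Your reduction to a one-point statement has the right general shape, but the one-point lemma you reduce everything to is never proved, and you yourself flag it as ``the real obstacle'': you need a \emph{single fixed} base $\cB$ such that \emph{every} tuple in $[\eps,\pi-\eps]^d$ is realisable at one added apex. The devices you suggest for it (eliminate variables and count, or transversality plus a connectedness/degree argument) do not close this gap: degree arguments over $\R$ do not produce real solutions, the squared angle relations lose exactly the sign information you mention, and nothing in the sketch rules out the solution locus becoming empty for some tuples as the target moves across the box. So, as written, the argument has a genuine hole at its only nontrivial step.

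The paper sidesteps this uniformity problem entirely, and that is the idea you are missing. Its local lemma only asks for a base $A_\theta$ handling $d$-tuples of angles \emph{all lying in a small neighbourhood $U_\theta$ of a single value $\theta$}: one takes $A_\theta=\{e_1,\dots,e_d\}\cup\{f_1,\dots,f_d\}$ with $e_i\widehat 0f_i=\theta$ for every $i$, and checks via the inverse function theorem that $x\mapsto\bigl(\cos(e_i\widehat xf_i)\bigr)_{i=1}^d$ has invertible derivative at $0$ (for a large scale parameter $\lambda$ the derivative is essentially $\frac{\sin\theta}{\sqrt{d-1}}(J-I)$), so its image contains a neighbourhood of $(\theta,\dots,\theta)$. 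Compactness of $[\eps,\pi-\eps]$ then gives a finite subcover $U_{\phi_1},\dots,U_{\phi_k}$; the prescribed angles are partitioned according to which $U_{\phi_i}$ they lie in, and each group is realised $d$ at a time at apexes near the origin of the corresponding base, the ceilings costing at most $k$ extra points. Because each block consists of nearly equal angles rather than an arbitrary $d$-tuple, only the local, inverse-function-theorem statement is ever needed --- precisely the ``solvable for targets near a convenient model'' version that you correctly observe is easy, with no need to upgrade it to the whole box. You would also need to address repeated angles, which the paper handles separately via Lemma~\ref{l:eq-angles}, but that is minor compared with the missing one-point lemma.
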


There are two trivially equivalent phrasings of our question: we can
ask how many arbitrary angles can be realised by $m$ points, or how
many points do we need to realise $n$ arbitrary angles. It turns out
to be easier to consider the first formulation, and we shall use that
form when discussing upper or lower bounds (i.e., an `upper bound' is an
upper bound on the number of angles that can be realised by $m$
points).

The layout of the paper is as follows. In Section~\ref{s:2d-upper} we
prove the upper bound in two dimensions (Theorem~\ref{t:2d-opt}), and
in Section~\ref{s:2d-lower} the lower bound in two dimensions. Then in
Section~\ref{s:hd-upper} we prove the higher dimensional upper bound
(Theorem~\ref{t:main-hd}). Finally, in Section~\ref{s:away-from-zero}
we prove Theorem~\ref{t:away-from-zero}. We conclude with some open
questions.

\bigskip We end this section with some background.  These
problems were posed by F\"uredi and Szigeti~\cite{FurSzi}; however
there is a long history of related problems, both for realising angles
and for realising distances.

Erd\H{o}s~\cite{MR0015796,MR1531563} asked how many points can be
placed in $\R^d$ such that no angle greater than $\pi/2$ is realised;
the hypercube provides an obvious lower bound of $2^d$ and Erd\H{o}s
asked whether this was extremal. This was answered positively by
Danzer and Gr\"unbaum in~\cite{MR0138040}, who also conjectured that
if all the angles must be acute (rather than just non-obtuse) then
there could not be a superlinear number of points. Erd\H{o}s and
F\"uredi~\cite{MR841305} disproved this in a very strong sense: they
showed that there can be exponentially many points in $\R^d$ with all
realised angles acute. More recently, Harangi~\cite{MR2837593}
significantly improved the exponent in this lower bound. Also, in
\cite{MR527745}, Conway, Croft, Erd\H{o}s and Guy initiated the study
of the more general questions of how many (or few) angles can have
size at least (at most) $\alpha$ for arbitrary angles~$\alpha$.

Of course, our question is somewhat different. These latter questions
are asking for a set all of whose angles have some property
(e.g., are non-obtuse) whereas here we are asking for some of the angles to
take specific values.

There is a vast literature on the many closely related questions
asking about distances rather than angles. Indeed, there are far too
many references for us to do justice to here, and we just mention a
few; see, for example, Brass, Moser and Pach~\cite{MR2163782} for a
more complete survey. These questions date back to a question
of Hopf and Pannwitz~\cite{hopf-pannwitz}, who asked how many times an
$n$ point set in $\R^2$ with diameter~1 can realise the distance~1,
with many people submitting solutions to the journal.  This led
Erd\H{o}s~\cite{MR0015796} to ask at least how many different
distances must be realised by $n$ points in $\R^2$, and he gave a
simple argument giving a lower bound of order $\sqrt
n$. Moser~\cite{MR0046663} gave the first of many improvements to the
exponent in the lower bound. This work has culminated with the very
recent breakthrough of Guth and Katz~\cite{MR3272924} who showed that
at least $\Omega (n/\log n)$ distances must be realised.

\section{The upper bound in two dimensions}\label{s:2d-upper}

We start by showing that if we can realise a subset of the angles that
includes the maximum angle `optimally' then we can realise all the angles
optimally.
\begin{lemma}\label{l:extend-2d}
  Suppose that $m\in \N$, that $\theta_1,\theta_2,\dots,\theta_{2m-4}$
  are $2m-4$ distinct angles, and that $P$ is an arrangement of $m'<m$
  points realising $2m'-4$ of the angles including the maximum
  angle. Then there is an arrangement of~$m$ points realising all
  $2m-4$ angles.
\end{lemma}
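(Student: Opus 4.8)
The idea is to add the missing $m - m'$ points one at a time, each new point realising exactly two of the not-yet-realised angles, while preserving all previously realised angles. So it suffices to prove the one-step statement: if $P$ is an arrangement of $k$ points (with $k \ge 4$, and the base case $k = m'$ realising $2m'-4$ angles including the maximum $\theta_{\max}$) realising some sub-multiset of the angles including $\theta_{\max}$, and $\alpha, \beta$ are two further angles not yet realised with $\alpha, \beta < \theta_{\max}$ (which holds since $\theta_{\max}$ is the maximum), then we can add one point to get $k+1$ points realising two more angles. Iterating this $m - m'$ times and counting ($2m' - 4 + 2(m - m') = 2m - 4$) gives the lemma. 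The hypothesis that the \emph{maximum} angle is among those realised is exactly what guarantees that every angle we still need to add is strictly smaller than something already present, which is what makes room for the construction.

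For the one-step construction I would place the new point $X$ far away from the existing configuration $P$, in the plane spanned by $P$. Pick two existing points $A, B$ of $P$. As $X$ ranges over a suitable region, the angle $\angle AXB$ can be made to take any value in $(0, \gamma)$ for some $\gamma > 0$ (it tends to $0$ as $X \to \infty$ along generic directions, and can be made larger by bringing $X$ close to the segment $AB$); so we can hit $\alpha$. To simultaneously realise $\beta$, I want $\beta$ to appear as an angle of the form $\angle A'XB'$ or $\angle X A' B'$ for existing points $A', B'$ — here I have a one-parameter family of positions of $X$ giving $\angle AXB = \alpha$ (roughly, a circular arc through $A$ and $B$), and along that family a second angle varies continuously and, by an intermediate value argument, passes through $\beta$. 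The key points to check are: (i) the second angle genuinely sweeps an interval containing $\beta$ as $X$ moves along the $\alpha$-locus — again, smallness of $\beta$ relative to the geometry, plus the freedom to take the arc large, should give a range $(0, \text{something})$; (ii) adding $X$ does not accidentally destroy any already-realised angle — but angles of $P$ among the old points are simply unaffected, and the only subtlety is that the two \emph{new} angles we create are precisely $\alpha$ and $\beta$ and not, say, duplicates we didn't want; since all angles are distinct and we have continuous freedom, we can perturb to avoid any coincidental extra realisations of forbidden values (in fact realising extra target angles only helps). One must also ensure the $k+1$ points can be kept in convex position if that stronger conclusion is wanted — placing $X$ far out beyond the convex hull of $P$ in an appropriate direction achieves this.

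The main obstacle is step (i): controlling the second angle along the one-parameter $\alpha$-locus and showing it sweeps out a set that is guaranteed to contain $\beta$, uniformly over all legal configurations $P$ that could arise from the induction. The cleanest way around this is probably to not insist on a generic $P$ but to be slightly more careful about \emph{which} two points of $P$ we attach $X$ to and \emph{where} we send $X$: choosing $A, B$ to be, say, two extreme points of $P$ and sending $X$ to infinity in a direction nearly parallel to $AB$ lets both new angles be simultaneously small and independently controllable (one governed by how close $X$'s direction is to $AB$, the other by a secondary offset), decoupling the two intermediate-value arguments. I expect the bookkeeping — tracking that the running count of realised angles is exactly $2k - 4$ at each stage and that the specific angles $\theta_1, \dots, \theta_{2m-4}$ can be partitioned so that $\theta_{\max}$ stays in the realised set throughout — to be routine, and the final paragraph of the actual proof will just be this induction write-up.
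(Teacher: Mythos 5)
Your induction framework (add one point at a time, each realising exactly two new angles, then count) is exactly right and matches the paper's. The problem is the one-step construction, and it is a genuine gap that you partly flag yourself. Your main plan is to send the new point $X$ far away in a direction nearly parallel to a segment $AB$ of the existing configuration, so that both new angles are small and independently tunable. But the two angles $\alpha,\beta$ you must realise are only known to be \emph{less than} $\theta_{\max}$, and $\theta_{\max}$ may be arbitrarily close to $\pi$; if, say, $\alpha=0.9\pi$ and $\beta=0.8\pi$, no far-away placement can produce them. Your fallback --- an intermediate-value argument for a second angle along the locus where $\angle AXB=\alpha$ --- is precisely the step you admit you cannot control ("the main obstacle"), and it is not clear it can be made uniform over all configurations arising in the induction. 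Note also that the maximum-angle hypothesis never actually enters your construction; you only use it to observe that the remaining angles are smaller than something present, which by itself does not make the far-away placement work.

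The way to close the gap is to realise the two new angles at \emph{different} vertices, one old and one new, both attached to the triple $A\widehat BC=\theta_{\max}$. Since $\alpha<\theta_{\max}$, the ray from $B$ making angle $\alpha$ with $BA$ lies strictly inside the angle $A\widehat BC$ and therefore meets the segment $AC$. Place the new point $D$ on this ray: then $A\widehat BD=\alpha$ regardless of where on the ray $D$ sits, and as $D$ slides from the intersection point with $AC$ out to infinity, the angle $A\widehat DC$ decreases continuously from $\pi$ to $0$, so by the intermediate value theorem it attains $\beta$. This decouples the two targets (the direction of the ray fixes the first angle, the position along the ray fixes the second) and the second angle sweeps the entire interval $(0,\pi)$, so no smallness restriction on $\alpha$ or $\beta$ is needed. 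This is where the maximum-angle hypothesis is genuinely used: it guarantees the ray enters the triangle $ABC$ and crosses $AC$, which is what makes the full sweep possible.
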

\begin{proof}
  Suppose $\theta_1$ is the maximum angle. We show that we can add a
  single point realising any two of the remaining angles.  By doing
  this repeatedly we obtain the result.
  \begin{figure}
    \begin{center}
      \includegraphics[scale=0.75]{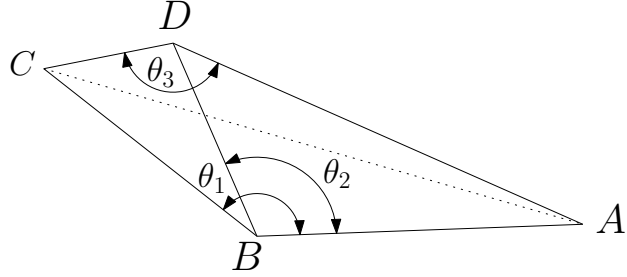}
    \end{center}
    \caption{Adding a point to an existing configuration.}\label{f:2d-ext}
  \end{figure}

  Suppose the two angles we want to add are $\theta_2$ and $\theta_3$.
  Let $A,B$ and $C$ be three points of the configuration such that
  $A\widehat BC$ realises the angle $\theta_1$; see Figure~\ref{f:2d-ext}.  We
  add a point $D$ on the ray (i.e., half-line) $BD$ such that $A\widehat BD=\theta_2$ and
  $A\widehat DC=\theta_3$. Since $\theta_1$ is the maximum angle, the ray $BD$
  lies between the side $BA$ and $BC$. By varying the point $D$ on
  that ray between the point of intersection of the ray and $AC$, and
  the point at infinity we can make $A\widehat DC$ any angle between
  $\pi$ and $0$; in particular we can obtain $\theta_3$.
\end{proof}
We remark that we could realise the angle $\theta_3$ as $E\widehat DF$
for any pair of points $E$ and $F$ lying on opposite sides of the ray
$BD$ in the above construction. We shall use this extra freedom when
proving that the points may be chosen in convex position in
Theorem~\ref{t:2d-opt}.

Using this lemma we can extend an optimally realised subset to the
full set. Thus, the key step in proving Theorem~\ref{t:2d-opt} (except
for the convex condition) is showing that we can realise any six
angles with five points.
\begin{lemma}\label{l:5points-2d}
  Let $\theta_1,\theta_2,\dots,\theta_6$ be six distinct angles. Then
  there is an arrangement of five points in the plane realising all
  six of the angles.
\end{lemma}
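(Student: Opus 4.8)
The plan is to realise the angles one family at a time, using a flexible base configuration. Order the angles so that $\theta_1$ is the largest. First I would start with a single triangle $ABC$ realising $\theta_1 = A\widehat BC$ at the vertex $B$; this is possible since $\theta_1 < \pi$. This uses three points and realises one angle, and — crucially — it leaves a lot of freedom: the triangle is determined only up to similarity, so one free parameter (say the ratio $BA/BC$, equivalently one of the other two angles) remains. The idea is to add the two remaining points one at a time, each time in the spirit of Lemma~\ref{l:extend-2d}, extracting two new angles per point, while keeping track of the single degree of freedom in the base triangle to handle the parity mismatch ($6$ angles, but $3+2+2 = 7$ ``slots'').

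Concretely, having fixed the triangle $ABC$, I would add a fourth point $D$ on the ray from $B$ into the interior of the angle $A\widehat BC$ (which lies between $BA$ and $BC$ precisely because $\theta_1$ is maximal), chosen so that $A\widehat BD = \theta_2$; then, sliding $D$ along that ray from the segment $AC$ out to infinity, the angle $A\widehat DC$ sweeps all of $(0,\pi)$, so I can also demand $A\widehat DC = \theta_3$. That fixes $D$ and realises $\theta_2,\theta_3$. Then add a fifth point $E$ similarly: pick a vertex, say $B$ again, and a ray from it into a suitable angular sector determined by two of the existing four points so that one new angle at $E$'s base vertex equals $\theta_4$, and slide $E$ along that ray to force a subtended angle (from two of the four existing points on opposite sides of the ray) equal to $\theta_5$. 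This realises $\theta_4,\theta_5$ and leaves only $\theta_6$.

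The one remaining angle $\theta_6$ is where the leftover degree of freedom in the original triangle comes in: the whole five-point configuration constructed above depends continuously on the one free shape parameter of $ABC$ (e.g. the angle $B\widehat AC$), while all the constraints imposed so far — being ratios/angles — scale correctly, so for each value of that parameter we get a valid configuration realising $\theta_1,\dots,\theta_5$. It then suffices to show that as this parameter ranges over its allowed interval, some angle among the $\binom{5}{1}\cdot(\text{pairs})$ many angles of the configuration that is not already pinned to one of $\theta_1,\dots,\theta_5$ varies continuously and takes a value equal to $\theta_6$; an extreme-value / intermediate-value argument (pushing the parameter to its two endpoints, where the configuration degenerates and some such angle tends to $0$ and to $\pi$, or at least straddles $\theta_6$) finishes it. One must also check that the five points can be kept in general position (no three collinear, all angles well-defined and distinct from the target values where needed) on an open dense set of parameters, so the IVT argument is not obstructed.

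The main obstacle I expect is the bookkeeping in the last step: verifying that there genuinely is a free, continuously-varying angle in the final configuration whose range contains $\theta_6$, and that the degeneracies at the endpoints of the parameter interval behave as claimed (rather than, say, all the ``spare'' angles happening to converge to the same limit at both ends). Handling this cleanly may require choosing the insertion points $D$ and $E$ and the reference pairs judiciously — for instance so that one particular subtended angle is manifestly monotonic in the parameter — rather than relying on a generic IVT. The earlier steps are routine given Lemma~\ref{l:extend-2d}; it is this parity-matching final coordinate that carries the real content.
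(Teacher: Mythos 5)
Your opening moves (a base triangle for the maximum angle $\theta_1$, then two applications of the Lemma~\ref{l:extend-2d} mechanism to pick up $\theta_2,\theta_3$ with a point $D$ and $\theta_4,\theta_5$ with a point $E$) are fine and match the spirit of the paper. The problem is the final step, which you yourself flag as carrying ``the real content'': it is left as an unverified intermediate-value argument, and it is not at all clear that it can be made to work as stated. The obvious spare parameters do not obviously reach an arbitrary $\theta_6$. For instance, the two free angles of the base triangle both range only over $(0,\pi-\theta_1)$, so if $\theta_1$ is close to $\pi$ they cannot hit a mid-range $\theta_6$. The spare angles at $D$ are also constrained: since $A$ and $C$ lie on opposite sides of the ray $BD$, one has $A\widehat DB+B\widehat DC=A\widehat DC=\theta_3$, so both are bounded above by $\theta_3$ and cannot reach a $\theta_6>\theta_3$; angles such as $D\widehat BE=|\theta_2-\theta_4|$ do not vary with the parameter at all; and the degenerate limits of the configuration (e.g.\ $C\to B$ as the parameter tends to an endpoint, dragging $D$ and $E$ with it) are messy and have not been analysed. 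So as written this is a plan with a genuine gap precisely where the difficulty of the lemma sits.

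The paper sidesteps all of this with a direct construction: order $\theta_1>\theta_2>\dots>\theta_6$ and realise them as six \emph{named} angles of an explicit five-point configuration ($A\widehat CE=\theta_1$, $B\widehat CE=\theta_2$, $A\widehat BD=\theta_3$, $A\widehat BC=\theta_4$, $B\widehat CD=\theta_5$, $C\widehat DE=\theta_6$), built by laying down rays from $C$ at angles $\theta_1$, $\theta_2$, $\theta_2-\theta_5$ and then placing one point on each ray. The only verifications needed are that three triangles close up, i.e.\ three angle sums are less than $\pi$, and each of these (e.g.\ $\theta_2-\theta_5+\theta_6<\theta_2<\pi$) is immediate from the chosen ordering. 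I would recommend you either adopt such an explicit assignment, or, if you want to pursue your scheme, commit to a specific spare angle, prove it is monotone (or at least compute its limits) in the shape parameter, and show its range always contains $\theta_6$ --- which is exactly the part currently missing.
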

\begin{proof}
  We suppose that the angles $\theta_i$ are given in decreasing
  order. Our aim is to show that these angles can be realised by the
  arrangement shown in Figure~\ref{f:paul-2d} in which $A\widehat
  CE=\theta_1$, $B\widehat CE=\theta_2$, $A\widehat BD=\theta_3$,
  $A\widehat BC=\theta_4$, $B\widehat CD=\theta_5$ and $C\widehat
  DE=\theta_6$.
  \begin{figure}
    \begin{center}
      \includegraphics[scale=0.75]{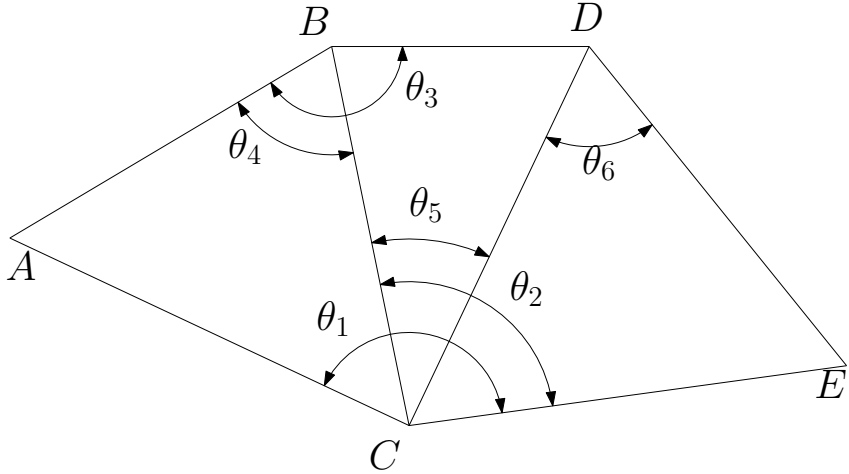}
    \end{center}
    \caption{Five points realising six angles.}\label{f:paul-2d}
  \end{figure}
  To prove that this realisation is possible we start by placing
  $CE$. We then define the ray $CD$ using $\theta_2-\theta_5$, and
  then the point $D$ using $\theta_6$; the ray $CB$ using $\theta_2$
  then the point $B$ using $\theta_3-\theta_4$; the ray $CA$ using
  $\theta_1$ then the point $A$ using $\theta_4$.  By the choice of
  the ordering of the angles the rays $CD$, $CB$ and $CA$ are as in
  Figure~\ref{f:paul-2d}.

  To complete the proof all we have to check is that $CDE$, $BCD$ and
  $ABC$ are all valid triangles.  We do this by checking that the
  angles we have defined sum to less than $\pi$. In $CDE$ we have
  $C\widehat DE+E\widehat CD=\theta_2-\theta_5+\theta_6<\pi$; in $BCD$ we have
  $B\widehat CD+C\widehat BD=\theta_5+\theta_3-\theta_4<\pi$; and in $ABC$ we have
  $A\widehat BC+A\widehat CB=\theta_1-\theta_2+\theta_4<\pi$.
\end{proof}

\begin{proof}[Proof of Theorem~\ref{t:2d-opt}]
  Lemmas~\ref{l:extend-2d} and~\ref{l:5points-2d} prove
  Theorem~\ref{t:2d-opt} except for showing that we may insist that
  the points lie in convex position. To prove this we modify our
  construction slightly.

  Start with the configuration given by Lemma~\ref{l:5points-2d} where
  $\theta_1$ is the largest angle and
  $\theta_2,\theta_3,\dots,\theta_6$ are the five smallest angles, and
  suppose that the remaining angles are $\phi_1,\phi_2,\dots ,\phi_k$
  in increasing order. We add these angles on rays between $CA$ and
  $CB$. Place a point $F$ such that $F\widehat CE=\phi_1$ and
  $A\widehat F B=\phi_2$. We just need to check that this point $F$ is
  convex position. Let $F'$ be the intersection of the (extended)
  lines $CF$ and $BD$. Then $A\widehat{F'}B=A\widehat{F'}D<A\widehat
  BD<\phi_2$. Thus $F$ must lie closer to $C$ than $F'$ and hence is
  in convex position. Repeating this construction replacing $B$ with
  $F$ and $D$ with $B$ for the remaining angles gives the
  result.
\end{proof}

Finally, we consider the case where some angles may be repeated; i.e.,
we have a tuple of angles, rather than a set. We view an angle as
being realised multiple times if it is realised by multiple distinct
pairs of rays. Note that this is more restrictive than just requiring
distinct triples; in particular, we view collinear points giving rise
to the same ray as being the same realisation of an angle. This result
follows easily from our proofs so far, but is a little technical. We
only need one more geometric idea: the following folklore lemma
showing that we can efficiently realise one angle several times.
\begin{lemma}\label{l:eq-angles}
  Suppose that $\theta$ is any angle. Then we can realise $\theta$
  with multiplicity $t(t-1)$ using $2t$ points.
\end{lemma}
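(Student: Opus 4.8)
The plan is to exhibit an explicit configuration of $2t$ points, naturally split into two groups of $t$, in which every pair consisting of one point from each group subtends the angle $\theta$ at a suitable pair of "apex" points --- but in fact the cleanest construction uses a single circular arc. First I would recall the inscribed angle theorem: if $P$ and $Q$ are fixed points and $X$ ranges over one of the two arcs of a circle through $P$ and $Q$, then the angle $P\widehat XQ$ is constant. So the plan is to fix a chord $PQ$ and place points on the arc so that many inscribed angles equal $\theta$; but a single chord only gives repetitions of \emph{one} inscribed angle, which is not obviously enough, so instead I would use the following symmetric arrangement.

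Take a circle $\Gamma$ and a chord $PQ$ subtending the angle $\theta$ from the major arc. Choose $t$ points $R_1,\dots,R_t$ on the major arc and $t$ points $S_1,\dots,S_t$ on the minor arc. For any $i$ and any $j$, consider the angle $R_i\widehat{S_j}R_k$ or, more to the point, fix attention on pairs: for each ordered pair $(i,j)$ with $i\neq j$ the chord $R_iR_j$ is seen from every point of one of its two arcs at a fixed angle. Rather than chase this, the simplest route is: place $2t$ points $X_1,\dots,X_{2t}$ on a circle so that each of the $t(t-1)$ ordered pairs of points in the "first half" is subtended by a point in the "second half" at angle $\theta$; concretely, pick the arc positions so that for all $i<j\le t$ the arc $X_iX_j$ not containing $X_{t+1},\dots,X_{2t}$ has inscribed angle $\theta$ as seen from any of the latter $t$ points. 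This forces the points $X_1,\dots,X_t$ to be equally spaced along a sub-arc of length corresponding to inscribed angle $\theta$ between consecutive ones --- wait, that over-constrains things. The correct and standard construction is: take an arc $\alpha$ of a circle of inscribed angle $\theta$, place $t$ points on it, and observe each of the $t(t-1)$ ordered pairs among a \emph{disjoint} set of $t$ points on the complementary region sees the appropriate chord; this is exactly the "two families on the two arcs of a fixed chord" picture, where one shows that for the $i$-th point $R_i$ of the first family and $j$-th point $S_j$ of the second, the angle $R_i \widehat C S_j$ at an apex $C$ is $\theta$ --- so really one fixes the chord to be traversed and slides both endpoints.

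The key steps, in order: (1) state the inscribed angle theorem and fix a circle $\Gamma$ together with a chord $PQ$ whose major arc sees it at angle $\theta$; (2) place $t$ points $R_1,\dots,R_t$ on the major arc of $\Gamma$ --- these will be "viewing" points --- and $t$ points forming the endpoints of $t$ chords each congruent to $PQ$, so that each $R_i$ sees each such chord at angle $\theta$; (3) count: each of the $t$ viewing points sees each of the $t$ special chords, but a chord has two endpoints, so one must be careful to get ordered pairs $(\text{left endpoint},\text{right endpoint})$ correctly, yielding $t(t-1)$ genuinely distinct realisations (distinct pairs of rays) of the angle $\theta$; (4) perturb/rotate to ensure all $2t$ points are distinct and no three are collinear in a way that would collapse two realisations into one, and check that we have used exactly $2t$ points. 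The main obstacle I anticipate is step~(3)--(4): getting the combinatorics of which pairs of rays are distinct exactly right so that the count is $t(t-1)$ and not, say, $\binom{t}{2}\cdot 2$ with hidden coincidences, and simultaneously certifying that the realisations are distinct in the strict sense (distinct pairs of rays, not merely distinct triples) demanded by the paragraph preceding the lemma. Everything else is an immediate consequence of the inscribed angle theorem and a genericity argument.
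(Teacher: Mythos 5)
Your central tool --- the inscribed angle theorem applied to equal-length chords of a circle --- is exactly the right one, and it is the one the paper uses. But the proposal never settles on a configuration that actually works, and the version you finally commit to in steps (1)--(4) does not. You ask for $t$ ``viewing'' points $R_1,\dots,R_t$ plus ``$t$ points forming the endpoints of $t$ chords each congruent to $PQ$''. First, $t$ points can carry $t$ pairwise congruent chords only if each point is an endpoint of exactly two of them, i.e.\ the points form closed cycles under stepping by the fixed arc length; for a generic $\theta$ that arc length is an irrational multiple of $2\pi$ and no such configuration exists. Second, even setting that aside, your count is $t$ viewers times $t$ chords $=t^2$ incidences, not $t(t-1)$, and the remark about ordered endpoint pairs does not repair this: a realisation of $\theta$ is an unordered pair of rays from an apex, so swapping the endpoints of a chord does not produce a new realisation. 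You correctly flag the counting as the unresolved obstacle, but that obstacle is the whole content of the lemma.

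The missing idea is to let every point play both roles. The paper places $t$ chords $x_1y_1,\dots,x_ty_t$, all of the same length (chosen so that the appropriate arc sees the chord at angle $\theta$), with the $2t$ endpoints in cyclic order $x_1,\dots,x_t,y_1,\dots,y_t$; in particular the chords pairwise cross, and each chord $x_iy_i$ has exactly $t-1$ of the remaining $2t-2$ points on each of its two arcs. Whichever of the two arcs is the one subtending $\theta$, the $t-1$ configuration points on it each see $x_iy_i$ at angle $\theta$, giving $t-1$ realisations per chord and $t(t-1)$ in total. Distinctness as pairs of rays is automatic: the points lie on a circle, so the $2t-1$ rays from any fixed apex to the other points are pairwise distinct, and the chords $\{x_i,y_i\}$ are disjoint as unordered pairs, so no two (apex, chord) incidences yield the same pair of rays. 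Your separation into a set of viewers and a set of chord endpoints is what costs you: it either needs $3t$ points or forces the unrealisable ``$t$ points, $t$ congruent chords'' structure.
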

\begin{proof}
  Place $t$ pairs of points $x_1,y_1,x_2,y_2,\dots,x_t,y_t$ on a
  circle such that all the $x$'s occur followed by all the $y$'s and
  make the distance between each pair $x_i,y_i$ such that the angle
  subtended by any point on the circle is $\theta$. Then any chord and
  any point between the end points of the chords subtends angle
  $\theta$.  Thus, the angle $\theta$ is realised $t(t-1)$ times.
\end{proof}
Since this is quadratic in the number of points (as opposed to the
linear bound given by Theorem~\ref{t:2d-opt}) we expect the case of
repeated angles to be easier, at least for large numbers of points or
angles. As we shall show, this is indeed the case. However, a little
thought shows that for small numbers of points the reverse may be
true; for example, five points can only realise an angle of $\pi-\eps$
four times.

\begin{lemma}\label{l:combine-tuples}
  Suppose that $\theta=(\theta_1,\theta_2,\dots, \theta_n)$ and
  $\phi=(\phi_1,\phi_2,\dots,\phi_{n'})$ are two tuples of angles realised by
  $m$ and $m'$ points respectively. Then the union of the two tuples
  can be realised with $m+m'-2$ points.
\end{lemma}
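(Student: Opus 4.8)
The plan is to realise the two tuples ``almost independently'' using the linear freedom we gain from Lemma~\ref{l:extend-2d}. Concretely, take an arrangement $P$ of $m$ points realising $\theta$ and an arrangement $Q$ of $m'$ points realising $\phi$. The naive union has $m+m'$ points; we want to save two. The idea is to identify one point of $P$ with one point of $Q$, and then to observe that one further point of one arrangement is redundant because the angle it provides can be re-realised ``for free'' at an existing vertex. So first I would set up the picture so that both arrangements live in the same copy of $\R^2$ (or a common $\R^d$; but two-dimensional realisations suffice here since every angle is realisable in the plane), scaled and positioned so that they interact only in a controlled way.

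The key step is the following gluing move, which is essentially the content of the remark after Lemma~\ref{l:extend-2d}. Pick a point $B$ of $P$ at which some angle $\theta_i = A\widehat BC$ is realised, and pick a point $B'$ of $Q$ at which some angle $\phi_j = A'\widehat{B'}C'$ is realised. Rotate and scale $Q$ so that $B'$ coincides with $B$ and so that the rays $B'A'$ and $B'C'$ both lie strictly inside the angle $A\widehat BC$ — possible since $\theta_i>0$ allows a nonempty cone and we may shrink $Q$ as much as we like, keeping all of $Q$ inside a tiny neighbourhood of $B$. Now $B=B'$ is a single point doing the work of two, so we have $m+m'-1$ points realising all of $\theta$ and all of $\phi$. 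To save the second point, note that in the arrangement $Q$ (by Lemma~\ref{l:extend-2d}, or rather its proof) at least one vertex was added last ``on a ray strictly between two sides of the maximum angle,'' and the remark tells us that the angle it contributes, say $\phi_k$, could equally well be realised as $E\widehat DF$ for any $D$ on that ray and any $E,F$ on opposite sides of the ray. Since the glued configuration already contains enough points on both sides of such a ray (the points of $P$ surrounding $B$), we can delete that last vertex of $Q$ and recover $\phi_k$ using points of $P\cup(Q\setminus\{\text{vertex}\})$. This leaves $m+m'-2$ points realising $\theta\cup\phi$.

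The main obstacle is bookkeeping about multiplicities and about exactly which two points we are allowed to economise on — the lemma is stated for tuples, so when an angle appears in both $\theta$ and $\phi$ it must be realised by genuinely distinct pairs of rays in the union, and we must check our identifications do not accidentally collapse two intended realisations into one (or, conversely, that the redundant point can be removed without losing a realisation we still need). The cleanest way around this is to build $Q$ from scratch using the same inductive scheme as Theorem~\ref{t:2d-opt}/Lemma~\ref{l:extend-2d} but starting the induction from the shared vertex $B$ of $P$ rather than from a fresh five-point block: at the base we reuse two existing points of $P$ as two of the ``seed'' points, saving one point there, and the two-points-per-angle extension step then adds exactly $m'-?$ further points, with the arithmetic working out to $m+m'-2$ in total. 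I would need to handle the small cases (when $m'$ or $m$ is so small that $Q$ is itself just a triangle, so there is no ``last added vertex'' to delete) separately, and in those cases one checks directly that three or four points can be merged into an existing configuration with the claimed saving, using Lemma~\ref{l:eq-angles} if a repeated angle forces a circle construction. I expect the bulk of the write-up to be this case analysis and the verification that the cones can be nested and shrunk without creating unwanted coincidences; the geometric heart is the single gluing move above.
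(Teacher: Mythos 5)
There is a genuine gap, and it sits exactly at the step where you try to save the second point. Your plan is to identify one vertex $B$ of $P$ with one vertex $B'$ of $Q$ (saving one point), and then to delete a further vertex $D$ of $Q$, recovering the angle $\phi_k$ it contributed via the remark after Lemma~\ref{l:extend-2d}. But that remark only gives you freedom in the choice of the two \emph{outer} points $E,F$ of the angle $E\widehat{D}F$; the vertex $D$ itself must still be a point of the configuration, lying on the prescribed ray. Once you delete $D$, every angle realised \emph{at} $D$ is gone, and the points of $P$ ``surrounding $B$'' will generically not lie on the ray you need, so there is nothing to serve as the new vertex. A further problem is that a vertex of $Q$ typically participates in many realised angles (as vertex and as ray-endpoint), not just one, so even if one angle could be recovered you would still lose others. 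Your fallback --- rebuilding $Q$ from scratch by the inductive scheme of Lemma~\ref{l:extend-2d} seeded at $B$ --- proves a different statement: the lemma must hold for \emph{arbitrary} realisations of $\phi$, because in the paper it is applied to the circle construction of Lemma~\ref{l:eq-angles} (a repeated angle realised $t(t-1)$ times by $2t$ concyclic points), which is not produced by that inductive scheme. So the case you defer to ``Lemma~\ref{l:eq-angles} if a repeated angle forces a circle construction'' is not a small case; it is the main use of the lemma, and your argument does not cover it.

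The intended argument is much simpler and avoids any deletion: take an edge $xy$ of the convex hull of the $\theta$-configuration and an edge of the convex hull of the $\phi$-configuration, and apply similarities so that both edges become the segment from $(0,0)$ to $(1,0)$, with the rest of the $\theta$-configuration in the lower half-plane and the rest of the $\phi$-configuration in the upper half-plane. The two shared points immediately give $m+m'-2$, and no realisation is lost: the only rays that can coincide between the two configurations lie along the $x$-axis, and since every angle is strictly between $0$ and $\pi$, each realised angle uses at most one such ray, so all the intended pairs of rays remain distinct. If you want to salvage your approach, the ``one shared vertex plus a shrunken copy of $Q$ in a cone at $B$'' move only ever saves one point; you would need to share two vertices from the outset, which is precisely what the half-plane construction does.
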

\begin{proof}
  This is essentially trivial: we reuse two of the points realising
  $\theta$ when realising $\phi$. We just need to be a little careful
  to avoid coincident rays.

  Fix two points $x$, $y$ of the configuration realising $\theta$ such
  that $xy$ is a line segment of the convex hull of the
  configuration. By a similarity transformation we may assume that
  $x=(0,0)$, $y=(1,0)$ and that all the other points lie in the lower
  half plane.

  Similarly, by a similarity, we may assume that all the points
  realising $\phi$ lie in the upper half plane and that $(0,0)$ and
  $(1,0)$ are two of these points.

  Then, the union of these points sets has size $m+m'-2$ and since we
  have no other coincident points all rays occurring in any of the
  realised angles, except rays contained in the $x$-axis, are
  distinct. Thus, this a realisation of the union of the tuples
  $\theta$ and $\phi$.
\end{proof}
Next we prove a weak version of our result for tuples. Somewhat
surprisingly it is easy to deduce a tight bound from this weak
result. Note we have made no effort to optimise the constant in this
lemma or the subsequent proposition.
\begin{lemma}\label{l:eq-weak}
  Suppose that $n$ and $m$ satisfy $m\ge n/2+30$, and
  $\theta_1,\theta_2,\dots,\theta_n$ are any $n$ (not necessarily
  distinct) angles. Then, we can realise all the angles with $m$
  points.
\end{lemma}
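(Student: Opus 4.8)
The plan is to partition the multiset $\{\theta_1,\dots,\theta_n\}$ according to which distinct values occur and with what multiplicities, realise each part by a construction suited to its multiplicity, and then paste all the parts together using Lemma~\ref{l:combine-tuples}. Gluing $k$ configurations with that lemma produces a configuration of size $2(k-1)$ \emph{fewer} than the sum of the sizes, so the only danger is a part whose size is not within a constant of half the number of angles it realises. Write the distinct values as $\psi_1,\dots,\psi_r$ with multiplicities $a_1,\dots,a_r$ (so $\sum a_i=n$), fix an absolute threshold $T$ (say $T=25$), and split $\{1,\dots,r\}$ into the \emph{singletons} ($a_i=1$), the \emph{low} values ($2\le a_i<T$) and the \emph{high} values ($a_i\ge T$).

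The singletons are easy: if there are $r_1$ of them then Theorem~\ref{t:2d-opt} realises them all with $\lceil r_1/2\rceil+2$ points when $r_1\ge6$, and at most $5$ points suffice when $r_1\le5$ (pad up to six distinct angles and apply Lemma~\ref{l:5points-2d}); in all cases at most $r_1/2+5$. The high values are handled by Lemma~\ref{l:eq-angles}: realise $\psi_i$ with multiplicity $\ge a_i$ using $2t_i$ points, where $t_i(t_i-1)\ge a_i$, so $2t_i\le 2\sqrt{a_i}+4$. Since each high $a_i\ge T$ there are at most $n_{\mathrm{hi}}/T$ high values, where $n_{\mathrm{hi}}=\sum_{\mathrm{hi}}a_i$; gluing these parts with Lemma~\ref{l:combine-tuples} gives a size at most $2\sum_{\mathrm{hi}}\sqrt{a_i}+O(n_{\mathrm{hi}}/T)$, and by the Cauchy--Schwarz inequality $\sum_{\mathrm{hi}}\sqrt{a_i}\le\sqrt{(n_{\mathrm{hi}}/T)\,n_{\mathrm{hi}}}=n_{\mathrm{hi}}/\sqrt T$, so for $T=25$ this part has size at most $n_{\mathrm{hi}}/2+O(1)$.

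The crux is the low values, where Lemma~\ref{l:eq-angles} is too wasteful (its overhead of $2t$ points for multiplicity $t(t-1)$ is bad when $t$ is small) and a separate construction for each value would cost a constant per value — unaffordable, since there may be $\Theta(n)$ of them. Here I would put everything on one circle using a \emph{fan of chords}. Fix a point $A$ on the circle; for each low value $\psi_i$ place a point $B_i$ on the circle so that the inscribed angle subtended by the chord $AB_i$ from a \emph{common} sub-arc $\Gamma$ equals $\psi_i$ (concretely: put $A$ at angle $0$, put $B_i$ at angle $2\psi_i$, and let $\Gamma$ be a short arc ending at angle $2\pi$; since every $\psi_i<\pi$ we may take $\Gamma\subseteq(2\max_i\psi_i,\,2\pi)$, which lies on the side of every chord $AB_i$ giving inscribed angle $\psi_i$). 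Now place $p:=\max_{\mathrm{low}}a_i$ further points inside $\Gamma$. Each such point $Z$ satisfies $A\widehat{Z}B_i=\psi_i$ for every $i$, and as no three points on a circle are collinear these are realised by the distinct ray-pairs $\{ZA,ZB_i\}$; distinct $Z$'s give distinct realisations. Hence every low value is realised with multiplicity $\ge p$, using $1+s_{\mathrm{low}}+p$ points, where $s_{\mathrm{low}}$ is the number of low values; since each low $a_i\ge2$ we have $s_{\mathrm{low}}\le n_{\mathrm{low}}/2$, so this part has size at most $n_{\mathrm{low}}/2+T$.

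Finally I would glue the (at most) three configurations with Lemma~\ref{l:combine-tuples}, and if necessary add a handful of points in general position to reach exactly $m$ — this only adds extra realisations, so it is harmless. As $n_{\mathrm{hi}}+n_{\mathrm{low}}+r_1=n$, the three size bounds add up to at most $n/2$ plus an absolute constant which, for $T=25$, works out below $30$ (fewer terms, and no worse, if a bucket is empty); the exact value is unimportant, as the statement only asks for \emph{some} such constant. The one step needing genuine care is the low-multiplicity construction: one must check that the fan chords $AB_i$ really do leave a common arc $\Gamma$ free for the pool of points $Z$, and that those points contribute the stated multiplicities without spurious coincidences. This is precisely where it matters that the angles are strictly below $\pi$, and it is essentially the only place the geometry is doing real work beyond bookkeeping.
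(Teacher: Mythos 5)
Your argument is correct in substance but takes a genuinely different, and considerably more elaborate, route than the paper. The paper's proof is a short induction on $n$: if at least six distinct angles occur, peel six of them off, realise them with five points by Lemma~\ref{l:5points-2d}, and glue back with Lemma~\ref{l:combine-tuples} at a net cost of three points per six angles; if some angle has multiplicity at least twelve, peel off twelve copies and realise them with eight points (Lemma~\ref{l:eq-angles} with $t=4$) at a net cost of six points per twelve angles; in the remaining case $n\le 55$ and $n+2<n/2+30$ points suffice trivially. Your global decomposition into singleton, low- and high-multiplicity classes carries out the same accounting all at once, and the fan-of-chords construction for the low-multiplicity class is a nice device (essentially Lemma~\ref{l:eq-angles} run with several chord lengths simultaneously) --- but it is not needed: the paper simply absorbs all bounded-multiplicity, few-distinct-values situations into the base case of the induction. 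What your approach buys is a sharper picture of where each class of angles goes (and a reusable gadget); what the paper's buys is brevity and no geometry beyond the two quoted lemmas.

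Two steps need repair. First, your high-multiplicity estimate does not close as written: with $T=25$, Cauchy--Schwarz gives
\[
\sum_{\mathrm{hi}}\bigl(2\sqrt{a_i}+4\bigr)\le \tfrac{2}{5}\,n_{\mathrm{hi}}+\tfrac{4}{25}\,n_{\mathrm{hi}}=\tfrac{14}{25}\,n_{\mathrm{hi}},
\]
which exceeds $n_{\mathrm{hi}}/2$ by an amount linear in $n_{\mathrm{hi}}$, not by $O(1)$; the $O(n_{\mathrm{hi}}/T)$ term is not a constant. The fix is easy --- note directly that the minimal $t_i$ with $t_i(t_i-1)\ge a_i$ satisfies $2t_i\le a_i/2$ for every $a_i\ge 24$ (e.g.\ $a_i=25$ gives $t_i=6$ and $12\le 12.5$), so each high bucket already costs at most half its multiplicity and the gluing only helps --- but the step as stated is false. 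Second, the lemma fixes the constant at $30$, so ``some constant'' is not literally enough; with the corrected high-multiplicity bound your three totals ($r_1/2+5$, $n_{\mathrm{low}}/2+25$, $n_{\mathrm{hi}}/2$, minus $4$ for the two gluings) sum to at most $n/2+26$, so the stated constant does survive, but this has to be checked rather than waved at, and it would not survive the alternative fix of simply enlarging $T$.
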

\begin{proof}
  This is a simple induction. Suppose $m\ge n/2+30$. If there are at
  least six distinct angles then, by the inductive hypothesis, we can
  realise the remaining $n-6$ angles with $m-3$ points and, by
  Lemma~\ref{l:5points-2d}, we can realise these six distinct angles
  with five points.  Thus, by Lemma~\ref{l:combine-tuples}, we can
  realise all the original $n$ angles with $m-3+5-2=m$ points
 
  On the other hand suppose any angle $\theta$ occurs with
  multiplicity at least twelve. By the inductive hypothesis, we can
  realise the $n-12$ tuple of the angles with this angle's
  multiplicity reduced by twelve using $m-6$ points, and by
  Lemma~\ref{l:eq-angles} with $t=4$, we can realise $\theta$ twelve
  times with eight points.  Thus, by Lemma~\ref{l:combine-tuples}, we
  can realise all the original $n$ angles with $m-6+8-2=m$ points.
 
  Thus, the only remaining case is when no angle occurs more than
  eleven times and there are no more than five distinct angles; so, in
  particular, $n\le 55$. Trivially we can realise these $n$ angles
  with at most $n+2<n/2+30=m$ points (three points for the first angle
  and one extra point for each extra angle). Thus, in this case the
  results holds and the proof of the lemma is complete.
\end{proof}
\begin{proposition}
  There exists $m_0$ such that, for any $m\ge m_0$, any $n\le 2m-4$,
  and any (not necessarily distinct) angles
  $\theta_1,\theta_2,\dots,\theta_n$, there is an arrangement of $m$
  points in the plane realising all $n$ angles.
\end{proposition}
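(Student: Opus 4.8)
The plan is to assemble the configuration from the pieces already available --- Lemma~\ref{l:5points-2d} (six distinct angles on five points), Lemma~\ref{l:eq-angles} (one angle with large multiplicity), Lemma~\ref{l:eq-weak} (the weak bound with slack~$30$), and Lemma~\ref{l:combine-tuples} (merging two realisations at a cost of two points) --- used in carefully chosen proportions so that the final point count is exactly tight. Two remarks make the bookkeeping cleaner. First, realising a value with multiplicity \emph{at least} $\mu$ suffices when the tuple asks for it $\mu$ times, so we are always free to over-realise. Second, since the statement allows a free constant $m_0$, it is enough to treat all $n$ exceeding some absolute constant $N_0$: tuples with $n<N_0$ are realised with the trivial $n+2\le N_0+2$ points, and in general when $m>\lceil n/2\rceil+2$ we realise with $\lceil n/2\rceil+2$ points and add dummy points. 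So fix $n\ge N_0$ and aim for exactly $\lceil n/2\rceil+2$ points. Let $D$ be the number of distinct values and $\mu^\ast$ the largest multiplicity, and split into three cases.

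\emph{Two easy cases.} If $D\le 100$, realise each value $v$ separately with multiplicity $\ge\mu_v$ by applying Lemma~\ref{l:eq-angles} with the least $t$ for which $t(t-1)\ge\mu_v$ (at most $2\sqrt{\mu_v}+4$ points each), and merge by Lemma~\ref{l:combine-tuples}; by Cauchy--Schwarz the total is at most $2\sqrt{Dn}+4D\le 20\sqrt n+400$. If instead $D>100$ but $\mu^\ast>n/6-3$, realise the dominant value with multiplicity $\ge\mu^\ast$ by Lemma~\ref{l:eq-angles} (at most $2\sqrt n+4$ points) and the remaining $n-\mu^\ast<\tfrac56 n+3$ angles by Lemma~\ref{l:eq-weak} (at most $(n-\mu^\ast)/2+31$ points), and merge: the total is at most $(n-\mu^\ast)/2+2\sqrt n+33$. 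In both cases this is at most $\lceil n/2\rceil+2$ once $n\ge N_0$, since $n/6-3$ beats $4\sqrt n$.

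\emph{The main case: $D>100$ and $\mu^\ast\le n/6-3$.} Here I would partition the multiset $\{\theta_1,\dots,\theta_n\}$ into $\lfloor n/6\rfloor-1$ ``rainbow'' blocks of six distinct angles together with one rainbow block of $6+r$ distinct angles, where $r=n\bmod 6$. Such a partition is an assignment of each value to $\mu_v$ distinct blocks filling all block sizes, and exists by the Gale--Ryser condition: listing multiplicities as $\mu_{(1)}\ge\cdots\ge\mu_{(D)}$, one needs $\sum_{i\le k}\mu_{(i)}\le\sum_{\text{blocks }B}\min(|B|,k)$ for all $k$, which is immediate for $k\le 6$ from $\mu_{(1)}\le\lfloor n/6\rfloor$, automatic for $k\ge 6+r$ since the right side is then $n$, and for the at most four intermediate values of $k$ follows from $D>100$ (which forces the tail $\sum_{i>k}\mu_{(i)}$ to exceed $6+r-k$). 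Given the partition, realise each six-angle block with five points by Lemma~\ref{l:5points-2d} and the $(6+r)$-angle block with $\lceil(6+r)/2\rceil+2$ points by Theorem~\ref{t:2d-opt}, then merge everything by Lemma~\ref{l:combine-tuples}. Since every block $B$ uses exactly $\lceil|B|/2\rceil+2$ points and merging $N$ configurations costs $2(N-1)$ points, the total comes to $\sum_B\lceil|B|/2\rceil+2$, and a direct computation (using $2\lceil r/2\rceil=r+(r\bmod 2)$ and $r\equiv n\pmod 2$) shows this equals exactly $\lceil n/2\rceil+2$.

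The main obstacle is this last case: proving the rainbow partition exists and checking that the point counts from merging the blocks balance \emph{exactly} to $\lceil n/2\rceil+2$ rather than merely up to a constant --- this is why the block size $6$ (matching Lemma~\ref{l:5points-2d}), the single oversized block, and the congruences of $n$ modulo $6$ and modulo $2$ all have to enter. The two easy cases are routine precisely because the quadratic efficiency of Lemma~\ref{l:eq-angles} on a high-multiplicity value leaves a surplus of order $\sqrt n$ or $n$ with which to absorb everything else.
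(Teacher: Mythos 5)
Your proof is correct and follows essentially the same route as the paper's: use the quadratic efficiency of Lemma~\ref{l:eq-angles} on a high-multiplicity value to absorb the additive slack in Lemma~\ref{l:eq-weak}, and otherwise partition the multiset into blocks of distinct angles realised via Lemma~\ref{l:5points-2d} and Theorem~\ref{t:2d-opt} and merged by Lemma~\ref{l:combine-tuples}. The only substantive difference is that you prove the existence of the rainbow partition via Gale--Ryser where the paper simply asserts it (``providing $n$ is large enough we can partition\dots''), and your thresholds ($D>100$, $\mu^*\le n/6-3$) replace the paper's constant multiplicity cutoff of $110$ --- a difference of bookkeeping rather than of method.
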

\begin{proof}
  Suppose that some angle occurs at least 110 times. Then we can
  realise this angle 110 times using 22 points ($t=11$ in
  Lemma~\ref{l:eq-angles}). By Lemma~\ref{l:eq-weak}, we can realise
  the $n-110$ tuple of all the remaining angles using at most
  $m-55+30$ points. In total this uses at most $m-3$ points and the
  proof is complete in this case.

  Thus, the only remaining case is where no angle occurs more than 110
  times. Providing $m_0$, and so $n$, is large enough we can partition
  the remaining angles into sets of angles $A_1,A_2,\dots,A_k$ where
  each $A_i$ has size at least six, each $A_i$ consists of distinct
  angles, and at most one $A_i$ has odd size. Using
  Theorem~\ref{t:2d-opt} we can realise each set $A_i$ of angles with
  $\big\lceil |A_i|/2\big\rceil+2$ points. Thus, by
  Lemma~\ref{l:combine-tuples} repeatedly, we can realise all $n$
  angles with
  \[2+\sum_{i=1}^k\big \lceil
  |A_i|/2\big \rceil=2+\left\lceil\sum_{i=1}^k|A_i|/2\right\rceil=2+\lceil
  n/2\rceil\le m
  \]
  points (where the first equality used the fact that at most one
  $|A_i|$ is odd).
\end{proof}
\section{The lower bound in two dimensions}\label{s:2d-lower}

In the next section we prove a result (Corollary~\ref{c:2m-3}) which
formalises the degrees of freedom intuition we gave in the
introduction and shows that we can almost never realise a set of
$2m-3$ of angles with $m$ points in the plane. In fact, we prove a
stronger result as we shall need this in the next section where we
deal with the higher dimensional case.

We start with a simple algebraic observation (see, for example,
Chapter 18 of~\cite{Garling-Galois}).
\begin{theorem}\label{t:trans-deg}
  Suppose that $\phi_i(x_1,..,x_{d-1})$, $i=1,...,d$, are rational
  functions. Then there exists a (non-zero) polynomial $g$ in
  $\R[z_1,\dots,z_d]$ such that $g(\phi_1,...,\phi_d)=0$.
\end{theorem}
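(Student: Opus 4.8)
The plan is to recognise this as the standard fact that any $d$ elements of the field $\R(x_1,\dots,x_{d-1})$, which has transcendence degree $d-1$ over $\R$, are algebraically dependent over $\R$; algebraic dependence of $\phi_1,\dots,\phi_d$ is exactly the assertion that such a $g$ exists. Rather than quote the transcendence-basis machinery, I would give the short dimension-counting argument that underlies it.

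First I would clear denominators, writing $\phi_i=p_i/q_i$ with $p_i,q_i\in\R[x_1,\dots,x_{d-1}]$, and set $q=q_1q_2\cdots q_d$ and $M=\sum_{i=1}^d(\deg p_i+\deg q_i)$. For a large integer $N$ I would look at the family of rational functions $q^N\phi_1^{a_1}\cdots\phi_d^{a_d}$, indexed by exponent vectors $\mathbf a=(a_1,\dots,a_d)$ with $a_1+\dots+a_d\le N$. Since each $a_i\le N$, the factor $q^N/\prod_i q_i^{a_i}$ is a genuine polynomial, so every member of this family is in fact a polynomial in $x_1,\dots,x_{d-1}$, of total degree at most $NM$.

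Next comes the counting. The number of admissible exponent vectors $\mathbf a$ is $\binom{N+d}{d}$, which grows like a degree-$d$ polynomial in $N$, whereas the dimension of the space of polynomials in $d-1$ variables of degree at most $NM$ is $\binom{NM+d-1}{d-1}$, which grows only like a degree-$(d-1)$ polynomial in $N$. Hence for all sufficiently large $N$ the former strictly exceeds the latter, and the polynomials $q^N\prod_i\phi_i^{a_i}$ must be linearly dependent over $\R$: there are constants $c_{\mathbf a}$, not all zero, with $\sum_{\mathbf a}c_{\mathbf a}\,q^N\prod_i\phi_i^{a_i}=0$. Dividing by the nonzero rational function $q^N$ leaves $\sum_{\mathbf a}c_{\mathbf a}\prod_i\phi_i^{a_i}=0$, so $g(z_1,\dots,z_d)=\sum_{\mathbf a}c_{\mathbf a}z_1^{a_1}\cdots z_d^{a_d}$ is a nonzero polynomial with $g(\phi_1,\dots,\phi_d)=0$.

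I do not expect any real obstacle: this is classical (the reference to Garling signals it is meant as a black box), and the only point requiring a moment's care is that multiplying by $q^N$ genuinely produces polynomials of degree $O(N)$ rather than $O(N^2)$ — this is precisely what the constraint $a_i\le N$ buys. I would include the counting argument purely for completeness.
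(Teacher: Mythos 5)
Your argument is correct, and it takes a genuinely more self-contained route than the paper. The paper proves this in two lines by quoting the standard fact that $\R(x_1,\dots,x_{d-1})$ has transcendence degree $d-1$ over $\R$, so any $d$ elements are algebraically dependent (with a pointer to Garling for the black box). You instead unpack that black box with the classical dimension-counting (Perron-style) argument: the key observations --- that multiplying $\prod_i\phi_i^{a_i}$ by $q^N$ with $q=q_1\cdots q_d$ yields the genuine polynomial $\prod_i p_i^{a_i}q_i^{N-a_i}$ of degree at most $NM$ precisely because each $a_i\le N$, and that $\binom{N+d}{d}$ grows like $N^d$ while $\binom{NM+d-1}{d-1}$ grows only like $N^{d-1}$ --- are exactly right, and the resulting linear relation descends to a nonzero $g$ with $g(\phi_1,\dots,\phi_d)=0$ since distinct exponent vectors give distinct monomials in $z_1,\dots,z_d$. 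What your version buys is a fully elementary, quantitative proof (it even gives a degree bound on $g$ in terms of $N$, $d$ and $M$, though the paper never needs one); what the paper's version buys is brevity and an immediate appeal to a well-known theorem. Both are perfectly acceptable here.
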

\begin{proof}
  The rational functions $\phi_i$ represent $d$ elements in the field
  $\R(x_1,...,x_{d-1})$.  However, this field has transcendence degree
  $d-1$ over $\R$, so there must be a non-trivial algebraic relation
  between them; i.e., there exists $g\in \R[z_1,...,z_d]$ with
  $g(\phi_1,...,\phi_d)=0$.
\end{proof}

\begin{lemma}\label{l:poly}
  Suppose that $m\in \N$; $n=2m-3$. Then there exists a
  polynomial~$g$ in $n$ variables $z_1,...,z_n$ with the following
  property. For any configuration of $m$ points in the plane realising
  angles $\theta_1,\theta_2,\dots,\theta_n$ we have
  \[
  g(s(\theta_1),s(\theta_2),\dots,s(\theta_n))=0
  \]
  where $s$ is the function defined by $s(\theta)=\sin^2\theta$.
\end{lemma}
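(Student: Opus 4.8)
The plan is to parametrise configurations of $m$ points in the plane by $2m-3$ real coordinates and express each $\sin^2\theta_i$ as a rational function of these coordinates, then invoke Theorem~\ref{t:trans-deg}. First I would fix a configuration realising $\theta_1,\dots,\theta_n$ and apply a similarity of $\R^2$ to normalise it: translate so that the first point is at the origin, rotate and scale so that the second point is at $(1,0)$. This uses up all $\binom{d+1}{2}+1=4$ degrees of freedom of the similarity group when $d=2$, leaving the remaining $m-2$ points free, i.e. $2(m-2)=2m-4$ coordinates; together with... wait, that gives $2m-4$, not $2m-3$. The correct bookkeeping is that the two normalised points contribute no free coordinates, the other $m-2$ points contribute $2m-4$, so in total we have $2m-4$ free real parameters $x_1,\dots,x_{2m-4}$, and Theorem~\ref{t:trans-deg} with $d-1 = 2m-4$ (so $d = 2m-3 = n$) produces a nonzero $g\in\R[z_1,\dots,z_n]$ annihilating any $n$ rational functions of these parameters.

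The key step is to check that $s(\theta_i)=\sin^2\theta_i$ is a \emph{rational} function of the coordinates. Each angle $\theta_i = A\widehat B C$ is determined by three of the points $A,B,C$, whose coordinates are (affine-linear functions of) our parameters $x_j$. Writing $u = A-B$ and $v = C-B$, we have $\cos\theta_i = \frac{\langle u,v\rangle}{\|u\|\|v\|}$, which is not rational because of the square roots in the norms; however
\[
\sin^2\theta_i = 1-\cos^2\theta_i = \frac{\|u\|^2\|v\|^2-\langle u,v\rangle^2}{\|u\|^2\|v\|^2} = \frac{(u_1v_2-u_2v_1)^2}{(u_1^2+u_2^2)(v_1^2+v_2^2)},
\]
which \emph{is} a rational function of the coordinates of $A$, $B$, $C$, hence of the $x_j$ (the denominator is not identically zero since $B\ne A$ and $B\ne C$). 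So each $s(\theta_i)$ equals $\phi_i(x_1,\dots,x_{2m-4})$ for an explicit rational function $\phi_i$. Applying Theorem~\ref{t:trans-deg} to $\phi_1,\dots,\phi_n$ (here $n = (2m-4)+1$) gives a nonzero polynomial $g$ with $g(\phi_1,\dots,\phi_n)\equiv 0$ as rational functions, and therefore $g(s(\theta_1),\dots,s(\theta_n))=0$ for the numerical values coming from any configuration.

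The main subtlety — and the only thing beyond routine — is the order of quantifiers: Theorem~\ref{t:trans-deg} is applied \emph{once}, to the fixed rational functions $\phi_i$ determined solely by \emph{which} triple of the $m$ points realises each $\theta_i$. Since there are only finitely many choices of such triples (at most $(m(m-1)(m-2))^n$ of them), there are only finitely many possible tuples $(\phi_1,\dots,\phi_n)$, hence finitely many polynomials $g$; taking their product yields a single nonzero polynomial that works for \emph{every} configuration of $m$ points realising some $n$-tuple of angles, regardless of which points play the roles of $A,B,C$ for each $\theta_i$. (One must also note that a configuration might be degenerate in the sense that a normalising similarity could force a denominator to vanish; this is handled by observing that the set of configurations where $\|u\|^2\|v\|^2$ vanishes for the relevant triple is exactly the set where the angle is undefined, so such configurations do not realise the $\theta_i$ in the first place.) With that finiteness observation in place, the lemma follows.
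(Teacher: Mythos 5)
Your proof is correct and takes essentially the same route as the paper: normalise two points by a similarity to leave $2m-4$ free coordinates, observe that $\sin^2$ of each realised angle is a rational function of those coordinates, apply Theorem~\ref{t:trans-deg} once for each of the finitely many ways of assigning triples of points to the $n$ angles, and multiply the resulting nonzero polynomials. The paper indexes its product over size-$(2m-3)$ subsets of the $3\binom{m}{3}$ realised angles rather than over triple-assignments, but the idea is identical (and your explicit attention to the quantifier order is, if anything, slightly more careful).
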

\begin{proof}
  Suppose that $v_1,v_2,\dots,v_m$ are the $m$ points of
  the configuration and we may assume that $v_1$ and $v_2$ are fixed
  with $v_1=(0,0)$ and $v_2=(1,0)$.

  Let $\phi_1,\phi_2,\dots,\phi_k$ be all the $k=3\binom m3$ angles
  realised by these $m$ points. Each of these angles is a function of
  the positions of the three points forming that angle and, for each
  $1\le i\le k$, we let $f_i\colon \R^{2m-4} \to \R$ be the map that
  sends the $2m-4$ coordinates of the $m-2$ (non-fixed) points
  $v_3,v_4,\dots,v_m$ to $s(\phi_i)$. Since
  \[
  f_i(v_3,v_4,\dots,v_m)=s(\phi_i)=1-\cos^2\phi_i=1-\frac{((v_j-v_k).(v_l-v_k))^2}{\|v_j-v_k\|^2\|v_l-v_k\|^2},\]
  where $v_j,v_k,v_l$ are the triple giving angle $\phi_i$, we see
  that each $f_i$ is a rational function of the coordinates of the
  points.

  For each set $I\subset [k]$ with $|I|=2m-3$,
  Theorem~\ref{t:trans-deg} applied to the functions $f_i$, $i\in I$,
  implies that there exists a polynomial $g_I$ in
  $\R[x_1,x_2,\dots,x_{2m-3}]$ such that $g_I$ applied to $f_i$, $i\in
  I$, is identically zero.  In particular, if the angles
  $\theta_1,\theta_2,\dots,\theta_n$ chosen are exactly $\phi_i$,
  $i\in I$ then $g_I(s(\theta_1),s(\theta_2),\dots,s(\theta_n))=0$.
  Let $g=\prod_{I\subset[k];\ |I|=2m-3}g_I$ be the product of all the
  possible $g_I$. Obviously $g$ is zero for any configuration and any
  choice of $n=2m-3$ angles $\theta_1,\theta_2,\dots,\theta_n$ from
  among $\phi_1,\phi_2,\dots,\phi_k$.
\end{proof}

\begin{corollary}\label{c:2m-3}
  Suppose $m\in \N$ and $n=2m-3$.  Let $S\subset[0,\pi]^n$ be the set
  of $n$-tuples of angles that can be realised by $m$ points in the
  plane. Then the set $S$ has measure zero. Moreover $S$ has Hausdorff
  dimension $n-1$.
\end{corollary}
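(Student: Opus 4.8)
The plan is to sandwich $S$. The inclusion $S\subseteq\{g\circ\sin^2=0\}$ coming from Lemma~\ref{l:poly} will give the measure-zero statement and the bound $\dim_H S\le n-1$, while the two-dimensional construction of Theorem~\ref{t:2d-opt} will produce a large explicit family inside $S$, giving $\dim_H S\ge n-1$.

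For the upper bounds, first I would note that the polynomial $g$ of Lemma~\ref{l:poly} is \emph{nonzero}: in the proof it is built as a product of the nonzero polynomials supplied by Theorem~\ref{t:trans-deg}. Thus $S$ is contained in $Z=\{\theta\in[0,\pi]^n: h(\theta)=0\}$, where $h(\theta_1,\dots,\theta_n)=g(\sin^2\theta_1,\dots,\sin^2\theta_n)$ is real-analytic on $\R^n$. The function $h$ is not identically zero: since $\sin^2\theta_i$ takes every value in $[0,1]$ as $\theta_i$ ranges over $[0,\pi]$, if $h$ vanished on a nonempty open set then $g$ would vanish on a nonempty open subset of $[0,1]^n$ and hence, being a polynomial, would be the zero polynomial. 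The zero set of a nontrivial real-analytic function on $\R^n$ has Lebesgue measure zero and Hausdorff dimension at most $n-1$; if one prefers to avoid quoting this, it can be seen directly by observing that away from the $n$ coordinate hyperplanes $\{\theta_i\in\{0,\tfrac\pi2,\pi\}\}$ the map $\theta\mapsto(\sin^2\theta_1,\dots,\sin^2\theta_n)$ is a local diffeomorphism, so on each of the resulting finitely many boxes $Z$ is carried bi-Lipschitzly onto part of the real-algebraic hypersurface $\{g=0\}\subseteq\R^n$ (of dimension $\le n-1$), while the exceptional hyperplanes themselves have dimension $n-1$. Either way $S\subseteq Z$ has measure zero and $\dim_H S\le n-1$.

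For the lower bound it suffices to produce a nonempty open set $U\subseteq(0,\pi)^{n-1}$ and a single-valued function $\psi\colon U\to(0,\pi)$ whose graph $\{(x,\psi(x)):x\in U\}$ lies in $S$: the coordinate projection $(\theta_1,\dots,\theta_n)\mapsto(\theta_1,\dots,\theta_{n-1})$ is $1$-Lipschitz and carries this graph bijectively onto $U$, so $\dim_H S\ge\dim_H U=n-1$ (no continuity of $\psi$ is needed). Assume $m\ge5$; the remaining small cases are easily handled directly (for instance, for $m=3$ the set $S$ is an open subset of the plane $\theta_1+\theta_2+\theta_3=\pi$). Given distinct angles $\theta_1,\dots,\theta_{n-1}$ in $(0,\pi)$ — here $n-1=2m-4$ — I would run the explicit construction in the proof of Theorem~\ref{t:2d-opt}, with every choice made fixed once and for all, to obtain an arrangement of $m$ points realising these angles. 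Since $m$ points realise $3\binom m3>2m-4$ angles in all, this arrangement realises at least one further angle; fixing in advance a rule for which ordered triple of the $m$ labelled points to read it off defines $\psi(\theta_1,\dots,\theta_{n-1})$, and then $(\theta_1,\dots,\theta_{n-1},\psi(\theta_1,\dots,\theta_{n-1}))\in S$. Taking $U$ to be the (open, full-measure) set of distinct-angle tuples on which the construction and the chosen triple are non-degenerate finishes the lower bound, so $\dim_H S=n-1$.

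I expect the main obstacle to be the upper bound: pushing ``$g$ is a nonzero polynomial'' through the non-injective substitution $\theta_i\mapsto\sin^2\theta_i$ and checking carefully that the composite $h=g\circ\sin^2$ is nontrivial, and then extracting the measure and dimension bounds for $Z$ (via either the real-analytic fact or the local-diffeomorphism reduction above). The lower bound, by contrast, is essentially bookkeeping once one notices that graphs of arbitrary functions over open subsets of $\R^{n-1}$ automatically have Hausdorff dimension $n-1$.
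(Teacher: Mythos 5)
Your proof is correct and follows essentially the same route as the paper: the measure-zero statement and the upper bound $\dim_H S\le n-1$ come from the polynomial identity of Lemma~\ref{l:poly}, and the lower bound from exhibiting a codimension-one family via the planar construction --- the paper simply takes the explicit extra angle $\theta_n=\theta_1-\theta_2$ read off from the configuration of Lemma~\ref{l:5points-2d}, which is a particular instance of your ``fixed triple'' rule. The care you take over the non-injectivity of $\theta\mapsto\sin^2\theta$ and the dimension of a graph of an arbitrary function are details the paper leaves implicit.
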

\begin{proof}
  That $S$ has measure follows immediately from Lemma~\ref{l:poly}, as
  does the fact that the Hausdorff dimension at most $n-1$. To
  complete the proof note that we can realise any $n$ angles with
  $\theta_1>\theta_2>\dots>\theta_n$ and $\theta_n=\theta_1-\theta_2$
  with $m$ points. Indeed, the construction (Lemma~\ref{l:5points-2d}
  together with Lemma~\ref{l:extend-2d}) shows realises the angles
  $\theta_1,\theta_2,\dots,\theta_{n-1}$ and we see that the angle
  $\theta_n=\theta_1-\theta_2$ also occurs in the construction.  Hence
  this co-dimension one subset is contained in $S$ and the result
  follows.
\end{proof}

\section{Higher dimensions}\label{s:hd-upper}
We start this section by examining the effect that a random projection
onto a two dimensional subspace has on an angle: in particular on very
small angles.  First we consider the length of a vector after a random
one-dimensional projection. Note, that both of the following two
lemmas are far from tight but suffice for our needs.

\begin{lemma}\label{l:project-vector}
  Suppose that $v$ is a unit vector in $\R^d$, and that $p$ is a
  projection onto a uniformly chosen one-dimensional subspace. Then,
  for all $\eps>0$
  \[
  \Prb(\|p(v)\|<\eps)\le \sqrt\eps+d\eps.
  \]
\end{lemma}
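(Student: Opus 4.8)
The plan is to realise $\|p(v)\|$ as a concrete random variable and then split the event $\{\|p(v)\|<\eps\}$ according to whether the projection direction is nearly aligned with $v$ or whether the (Gaussian, or uniform-on-sphere) coordinates of the direction are themselves small. Concretely, a uniformly chosen one-dimensional subspace is spanned by a uniformly chosen unit vector $u\in S^{d-1}$, and $\|p(v)\|=|\langle u,v\rangle|$. So I want to bound $\Prb(|\langle u,v\rangle|<\eps)$ for $u$ uniform on the sphere and $v$ a fixed unit vector. Rotating coordinates so that $v=e_1$, this is just $\Prb(|u_1|<\eps)$.

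First I would handle the $d=2$ case essentially by hand: there $u_1=\cos\alpha$ for $\alpha$ uniform on $[0,2\pi)$, and $\Prb(|\cos\alpha|<\eps)=\tfrac{2}{\pi}\arcsin\eps\le\sqrt\eps$ for the relevant range of $\eps$ (and the bound is trivial once $\sqrt\eps+d\eps\ge1$, so I only need $\eps$ small). Then for general $d$ I would write $u=g/\|g\|$ where $g=(g_1,\dots,g_d)$ is a standard Gaussian vector, so $|u_1|<\eps$ means $|g_1|<\eps\|g\|$. I split on the size of $\|g\|$: either $\|g\|^2\ge d$, in which case I instead bound things via $|g_1|<\eps\sqrt{d}$ together with a crude bound on the remaining Gaussian mass — actually the cleaner route is to note $\|g\|^2=g_1^2+\cdots+g_d^2$, condition on $(g_2,\dots,g_d)$, write $R^2=g_2^2+\cdots+g_d^2$, and observe that $|g_1|<\eps\|g\|$ forces (when $\eps<1$) $|g_1|<\eps R/\sqrt{1-\eps^2}\le 2\eps R$ for $\eps\le \tfrac12$. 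So it suffices to bound $\Prb(|g_1|<2\eps R)$. Conditioning on $R$, the one-dimensional Gaussian density is at most $1/\sqrt{2\pi}<1$, giving $\Prb(|g_1|<2\eps R\mid R)\le 4\eps R/\sqrt{2\pi}\le 2\eps R$. Thus $\Prb(|u_1|<\eps)\le 2\eps\,\mathbb{E}R$, and since $\mathbb{E}R\le(\mathbb{E}R^2)^{1/2}=\sqrt{d-1}\le\sqrt d$ I get a bound of order $\eps\sqrt d$. That is in fact stronger than $\sqrt\eps+d\eps$, so the claimed inequality follows a fortiori after checking the small constants.

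The one genuine subtlety — and the reason the paper states the weaker-looking bound $\sqrt\eps+d\eps$ — is getting the constants honestly right near the boundary cases ($\eps$ close to $1$, or $d=2$ where the $\sqrt\eps$ term is doing the work and the $\eps\sqrt d$ heuristic degenerates). The safe way to package this is: if $\sqrt\eps\ge 1$ or $d\eps\ge1$ the inequality is trivial since a probability is at most $1$; if $\eps\le\tfrac14$ and $d\ge2$, run the Gaussian argument above to get $\Prb\le 2\sqrt d\,\eps\le d\eps\cdot\tfrac{2}{\sqrt d}\le d\eps$ — wait, that needs $2/\sqrt d\le1$, true for $d\ge4$ — and for $d=2,3$ fall back on $\Prb(|u_1|<\eps)\le \sqrt\eps$ directly via the explicit density of $u_1$, which on $S^{d-1}$ is proportional to $(1-t^2)^{(d-3)/2}$, a bounded quantity near $t=0$ for $d\le 3$ (for $d=2$ it is the $\arcsin$ computation above; for $d=3$ the density is exactly $\tfrac12$ on $[-1,1]$, giving $\Prb=\eps\le\sqrt\eps$).

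So the key steps, in order, are: (1) reduce to $\Prb(|u_1|<\eps)$ with $u$ uniform on $S^{d-1}$; (2) dispose of the trivial range $\sqrt\eps+d\eps\ge 1$; (3) for small $\eps$ and large $d$, pass to Gaussian coordinates and estimate $\Prb(|g_1|<2\eps R)$ by conditioning on $R=\|(g_2,\dots,g_d)\|$ and using the bounded Gaussian density plus $\mathbb{E}R\le\sqrt{d-1}$; (4) clean up small $d$ with the explicit projection density. I expect step (3)'s bookkeeping of the $\eps/\sqrt{1-\eps^2}$ factor — i.e. making sure the restriction $\eps\le\tfrac12$ or $\tfrac14$ is compatible with the case split in step (2) — to be the only place one has to be careful; everything else is routine.
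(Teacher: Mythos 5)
Your proposal is correct, and while it starts from the same place as the paper (represent the uniform direction as $g/\|g\|$ for a standard Gaussian vector $g$, and reduce by symmetry to bounding $\Prb(|g_1|<\eps\|g\|)$), the estimate itself is genuinely different. The paper uses the crude union bound $\{|g_1|<\eps\|g\|\}\subseteq\{|g_1|<\sqrt\eps\}\cup\{\|g\|\ge 1/\sqrt\eps\}$, bounding the first event by the fact that the normal density is at most $1/2$ and the second by Markov's inequality applied to $\|g\|^2$ (which has mean $d$); this is exactly where the two terms $\sqrt\eps$ and $d\eps$ in the statement come from. You instead condition on $R=\|(g_2,\dots,g_d)\|$, note that $|g_1|<\eps\|g\|$ forces $|g_1|<\eps R/\sqrt{1-\eps^2}$, and integrate the Gaussian density of $g_1$ over an interval of length $O(\eps R)$, finishing with $\mathbb{E}R\le\sqrt{\mathbb{E}R^2}=\sqrt{d-1}$. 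This buys you the strictly stronger bound $2\sqrt{d-1}\,\eps$ for $\eps\le\tfrac12$, which is the correct order in both $\eps$ and $d$; and since $2\sqrt{d-1}\le d$ for every $d$ (as $(d-2)^2\ge0$), it implies the stated inequality outright, so your worried case analysis for $d=2,3$ is actually unnecessary (it only arose because you bounded $\mathbb{E}R$ by $\sqrt d$ rather than $\sqrt{d-1}$). Your fallback computations for small $d$ and the disposal of the trivial range $\sqrt\eps+d\eps\ge1$ are also correct, so the argument is complete; the paper's version is simply shorter at the cost of a weaker (but, as the authors note, still sufficient) bound.
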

\begin{proof}
  Instead of the setup in the lemma we consider a fixed projection~$p$ on
  to the first coordinate and a random unit vector. Now one way of
  generating a random unit vector is to generate $d$ independent
  variables $X_i$ all with $N(0,1)$ distribution and then let $v=X/\|X\|$.

  If $\|p(v)\|<\eps$ then either $|X_1|<\sqrt\eps$ or $\|X\|\ge
  1/\sqrt\eps$. Since the density function for the normal distribution
  is always less than $1/2$ we see that
  $\Prb(|X_1|<\sqrt\eps)<\sqrt\eps$. Also $\|X\|^2$ is a random variable
  with mean $d$. Hence, by Markov's inequality,
  \[
  \Prb(\|X\|\ge 1/\sqrt\eps)=  \Prb(\|X\|^2\ge 1/\eps)\le d\eps
  \]
  Thus 
  \[
  \Prb(v_1<\eps)\le \sqrt\eps+d\eps.\qedhere
  \]
\end{proof}

Next we show that a random projection does not change angles `too
much'.  
[In the following lemma we refer to a `uniformly chosen
two-dimensional subspace'. Formally, this refers to the probability
measure on the space of all two-dimensional subspaces that is induced
by the action of $SO(d)$ equipped with its standard Haar measure.]

\begin{lemma}\label{l:project-angle}
  Suppose that $v_1,v_2$ are two vectors with angle $\theta<\pi/3$
  between them, and that $p$ is a projection onto a uniformly chosen
  two-dimensional subspace. Let $\phi$ be the angle between $p(v_1)$
  and $p(v_2)$. Then
  \[
  \Prb(\phi\not\in[\theta\eps,\theta/\eps])<3\sqrt\eps+5d\eps.
  \]
\end{lemma}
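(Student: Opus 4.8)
The plan is to control $\phi$ by tracking what the projection $p$ does to the two vectors $v_1,v_2$ individually and to their difference. Without loss of generality take $v_1,v_2$ to be unit vectors. Write $w=v_2-v_1$ (or a suitable rescaling); since the angle between $v_1$ and $v_2$ is $\theta<\pi/3$, elementary trigonometry gives $\|w\|$ comparable to $\theta$ (say $\tfrac{1}{2}\theta\le\|w\|\le\theta$), and the angle $\theta$ is essentially determined by the ratio $\|w\|/\|v_1\|$ together with the (bounded) angle of the isoceles triangle they span. The key point is that $\phi$, the angle between $p(v_1)$ and $p(v_2)$, is governed by the same three lengths after projection: $\|p(v_1)\|$, $\|p(v_2)\|$ and $\|p(w)\|$. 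Concretely, I would show that if none of the three quantities $\|p(v_1)\|$, $\|p(v_2)\|$ is too small and $\|p(w)\|$ is not too large relative to $\theta$, then $\phi$ cannot be much larger than $\theta$; and conversely, if $\|p(w)\|$ is not too small, $\phi$ cannot be much smaller than $\theta$.

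The execution would run as follows. First, a projection onto a uniform two-dimensional subspace dominates (in the relevant one-sided sense) a projection onto a uniform one-dimensional subspace: $\|p_2(v)\|\ge\|p_1(v)\|$ where $p_1$ is the further projection onto a uniform line inside the plane, and a uniform line in a uniform plane is a uniform line. Hence Lemma~\ref{l:project-vector} applies to give $\Prb(\|p(v_i)\|<\eps)\le\sqrt\eps+d\eps$ for $i=1,2$ and likewise $\Prb(\|p(w)\|<\eps\|w\|)\le\sqrt\eps+d\eps$. Second, for the upper tail on $\|p(w)\|$: $p$ is a contraction, so $\|p(w)\|\le\|w\|$ deterministically, but we need the sharper statement that $\|p(w)\|$ is not much larger than $\theta$ — which is automatic since $\|w\|\le\theta$, so $\|p(w)\|\le\theta$ always. (This is why we phrased things via $w$ rather than trying to bound $\phi$ from $\phi$ directly.) Third, assemble: off an exceptional event of probability at most $(\sqrt\eps+d\eps)$ for each of $\|p(v_1)\|$ and $\|p(v_2)\|$, plus at most $(\sqrt{\eps'}+d\eps')$ for the lower bound on $\|p(w)\|$ (with $\eps'$ a constant multiple of $\eps$ chosen so the final constants work out), we have $\|p(v_1)\|,\|p(v_2)\|\ge\eps$ and $\|p(w)\|\ge c\eps\theta$ and $\|p(w)\|\le\theta$. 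Then $\sin\phi$ is comparable to $\|p(w)\|/\max(\|p(v_1)\|,\|p(v_2)\|)$ up to absolute constants (again by the law of sines / area formula in the projected triangle), and since $\phi<\pi/2$ when the configuration is non-degenerate, $\phi$ is comparable to $\sin\phi$. Chasing the inequalities through yields $\theta\eps\le\phi\le\theta/\eps$ after adjusting the constant in front of $\eps$; collecting the three exceptional probabilities gives a bound of the form $3\sqrt\eps+5d\eps$.

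The main obstacle is the bookkeeping in the two-sided estimate: one has to be careful that the triangle with vertices $0,p(v_1),p(v_2)$ is genuinely non-degenerate (so that $\phi$ is well-defined and bounded away from $\pi$) and that the elementary relation "$\sin\phi\asymp\|p(w)\|/\|p(v_i)\|$" holds with constants independent of $d$; this needs the hypothesis $\theta<\pi/3$ so that $v_1,v_2$ are genuinely close and their projected images do not wrap around. I would also need to verify that the constant $c$ in $\|p(w)\|\ge c\eps\theta$ can be absorbed into the exponent-free part of the bound — i.e. replacing $\eps$ by a constant multiple of $\eps$ only changes $3\sqrt\eps+5d\eps$ by a constant factor, which is why the stated constants $3$ and $5$ (rather than $1$ and $1$) appear. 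None of this is deep; it is a matter of choosing the right intermediate quantities, and the choice of $w=v_2-v_1$ as the object to project is what makes the upper bound on $\phi$ fall out for free from the contraction property.
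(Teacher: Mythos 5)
Your upper-tail estimate is essentially fine and matches the spirit of the paper's (which writes $v_2=\alpha v_1+\beta u$ with $u\perp v_1$, $\alpha=\cos\theta$, $\beta=\sin\theta$, bounds $\phi\le 4\theta/\|p(v_1)\|$ and applies Lemma~\ref{l:project-vector} to $\|p(v_1)\|$). The genuine gap is in your lower-tail step. The assertion that $\sin\phi$ is comparable, up to absolute constants, to $\|p(w)\|/\max(\|p(v_1)\|,\|p(v_2)\|)$ is false in the direction you need: the law of sines gives $\sin\phi=\|p(w)\|\sin\psi/\|p(v_1)\|$, where $\psi$ is the angle of the projected triangle at $p(v_2)$, and $\sin\psi$ can be arbitrarily small. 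Concretely, in $\R^3$ take $v_1=e_1$, $v_2=\cos\theta\,e_1+\sin\theta\,e_2$, and project onto the plane spanned by $(e_1+e_2)/\sqrt2$ and $e_3$: then $p(v_1)$ and $p(v_2)$ are exactly parallel, so $\phi=0$, even though $\|p(v_1)\|=1/\sqrt2$, $\|p(v_2)\|=(\cos\theta+\sin\theta)/\sqrt2$ and $\|p(w)\|=(\sin\theta-2\sin^2(\theta/2))/\sqrt2$ is comparable to $\|w\|$. A positive-measure neighbourhood of this plane satisfies all three of your conditions while $\phi<\theta\eps$, so the implication ``off the exceptional events, $\phi\ge\theta\eps$'' simply does not hold.

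The quantity you must bound below is not $\|p(w)\|$ but the component of $p(w)$ (equivalently of $p(v_2)$) \emph{orthogonal to} $p(v_1)$ within the projection plane. This is exactly where the paper's argument does its real work: in the formulation with $p$ fixed and the vectors random, it chooses coordinates on the $(d-1)$-sphere of unit vectors $u$ orthogonal to $v_1$ so that this orthogonal component equals a single coordinate $u_1$ of the uniformly random $u$, deduces $\phi\ge\tan^{-1}\bigl(\beta|u_1|/(\alpha\|p(v_1)\|)\bigr)\ge|u_1|\theta$ using $\|p(v_1)\|\le1$, and then applies the one-dimensional anti-concentration bound (the content of Lemma~\ref{l:project-vector}) to $u_1$. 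Your Lemma~\ref{l:project-vector} applied to $w$ controls $\|p(w)\|$ only, which the example above shows is not enough; you would need to replace your third event by this ``orthogonal component'' event and justify its anti-concentration separately. A secondary issue is the bookkeeping: three events each of probability $\sqrt{c\eps}+cd\eps$ with thresholds $c>1$ (which your chain of inequalities forces) already overshoot $3\sqrt\eps+5d\eps$, whereas the paper spends $2\sqrt\eps+4d\eps$ on the upper tail and $\sqrt\eps+(d-1)\eps$ on the lower tail.
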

\begin{proof}
  Similarly to the previous proof, we will fix the projection $p$, this
  time onto the first two coordinates, and choose random unit vectors
  $v_1$ and $v_2$ subject to the constraint that the angle between
  them is $\theta$. We do this by picking a random unit vector $v$ and
  a random unit vector $u$ orthogonal to $v$, and setting $v_1=v$ and
  $v_2=\alpha v+\beta u$ where $\alpha=\cos \theta$ and $\beta=\sin
  \theta$. Since $\theta<\pi/3$ we have that $\alpha>1/2$, and
  $\theta<\tan\theta=\beta/\alpha<2\theta$.

  We need to bound the angle $\phi$ between $p(v_1)$ and
  $p(v_2)=\alpha p(v_1)+\beta p(u)$. First we bound the probability
  that this angle is big. The angle $\phi$ satisfies 
  \[
\phi\le  \sin^{-1}\left(\frac{\beta\|p(u)\|}{\alpha\|p(v_1)\|}\right)
  \le \frac{2\beta}{\alpha\|p(v_1)\|}
  \le \frac{4\theta}{\|p(v_1)\|}. 
  \]
  Thus, by Lemma~\ref{l:project-vector} for any $\eps>0$,
  \[
  \Prb(\phi>\theta/\eps)\le\Prb(\|p(v_1)\|<4\eps)\le2\sqrt \eps+4d\eps.
  \]

  Next we bound the chance that $\phi$ is small. The vector $u$ is
  chosen orthogonal to $v$ so lies on a $d-1$ dimensional sphere. Take
  a basis of this sphere such that the first coordinate direction is
  in the plane spanned by the first two coordinate directions and
  perpendicular to $p(v_1)$. Now the component of $p(u)$ perpendicular
  to $p(v)$ is exactly the first component $u_1$ of $u$ in this
  basis. Since $\|p(v)\|\le 1$ we have
  \[\phi\ge \tan^{-1}\left(\frac{\beta|u_1|}{\alpha\|p(v)\|}\right)\ge |u_1|\theta
  \]
  Applying Lemma~\ref{l:project-vector} again we see that, for any
  $\eps>0$,
  \[
  \Prb(\phi<\theta\eps)\le\Prb(|u_1|<\eps)\le \sqrt \eps+(d-1)\eps.
  \]
  Combining these two bounds gives the result.
\end{proof}
This result shows that projecting on to a random plane only changes
the angles by at most a constant factor. We want to use the upper
bound of $2m-4$ for the number of angles that can be realised by $m$
points in the plane to deduce the same bound in higher
dimensions. Since we do not have control over exactly what happens to
the angles when we project we need a bound for the two dimensional
case that can cope with the uncertainty introduced by the projection.
We use Lemma~\ref{l:poly} to prove such a result.
\begin{theorem}\label{t:2D-not-dense}
  Suppose that $h$ is a strictly positive real valued function, $m\in
  \N$, $n=2m-3$, and that $\eps>0$ is given.  Then there exists
  $\eps_i\le \eps$ for $1\le i\le n$ such that for any sequence of
  angles $\theta_i$ for $1\le i\le n$ with
  $\theta_i\in[h(\eps_i),\eps_i]$ there is no arrangement of $m$
  points in the plane achieving the angles $\theta_i$,
  $i=1,\dots,n$.
\end{theorem}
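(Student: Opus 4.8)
The plan is to use Lemma~\ref{l:poly} together with a scaling/compactness argument. Lemma~\ref{l:poly} gives a single nonzero polynomial $g$ in $n$ variables such that $g(s(\theta_1),\dots,s(\theta_n))=0$ whenever the $\theta_i$ are realised by $m$ points in the plane, where $s(\theta)=\sin^2\theta$. The key observation is that for angles near $0$ we have $s(\theta)=\sin^2\theta\approx\theta^2$, so the condition $g(\theta_1^2,\dots,\theta_n^2)\approx 0$ is a polynomial constraint on the $\theta_i$ themselves. I want to choose the scales $\eps_i$ so that \emph{no} tuple with $\theta_i\in[h(\eps_i),\eps_i]$ can satisfy (even approximately) this polynomial relation.

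First I would reduce to a normalised version. Writing $\theta_i = \eps_1 t_i$ for suitable ratios $t_i$, realisability forces $g(s(\eps_1 t_1),\dots)=0$; expanding $s(\eps_1 t_i)=\eps_1^2 t_i^2 + O(\eps_1^4)$ and extracting the lowest-degree homogeneous part $g_0$ of $g$ (in the variables $z_i$, after the substitution $z_i\mapsto t_i^2$), one gets that in the limit $\eps_1\to0$ the scaled tuple must satisfy $g_0(t_1^2,\dots,t_n^2)=0$. So it suffices to choose the $\eps_i$ recursively so that the vector $(\eps_1,\dots,\eps_n)$, suitably normalised, avoids the zero set of this nonzero homogeneous polynomial, and to do so robustly enough that the $O(\eps^4)$ error terms and the slack $\theta_i\in[h(\eps_i),\eps_i]$ cannot push us back onto the zero set. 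Concretely: pick $\eps_1\le\eps$ arbitrary; having chosen $\eps_1,\dots,\eps_{i-1}$, note that $g$ restricted to the affine slice where the first $i-1$ coordinates are fixed near $s(\theta_1),\dots,s(\theta_{i-1})$ is still not identically zero (choosing the earlier $\eps_j$ generically, via the fact that a nonzero polynomial is nonzero on a dense set), so its zero set in the remaining coordinates is a proper subvariety; choose $\eps_i\le\eps$ small enough — and in particular small compared with $h(\eps_{i-1})$ in the appropriate way — so that the interval $[s(h(\eps_i)),s(\eps_i)]$ of possible values for $s(\theta_i)$ misses that subvariety. Iterating, the full tuple $(s(\theta_1),\dots,s(\theta_n))$ is forced off the zero set of $g$, contradicting Lemma~\ref{l:poly}.

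The main obstacle is making the inductive choice of $\eps_i$ genuinely work against the \emph{range} $[h(\eps_i),\eps_i]$ rather than a single value: at each stage we need that after fixing the earlier near-values, there is a whole subinterval of admissible $\eps_i$ (namely an interval of the form $(0,\delta_i]$ with $\delta_i$ depending on the previous choices) on which the surviving polynomial — as a one-variable polynomial in $s(\theta_i)$ with coefficients that are themselves near-constant functions of $\theta_1,\dots,\theta_{i-1}$ — has no zero in $[s(h(\eps_i)),s(\eps_i)]$. This is where one uses that $h>0$ strictly: the interval $[s(h(\eps_i)),s(\eps_i)]$ stays bounded away from $0$ relative to its own length only in a controlled way, but the coefficients of the surviving polynomial are bounded away from their "degenerate" values because the earlier $\eps_j$ were chosen generically, so for $\eps_i$ small the polynomial is dominated by a single monomial and has no zeros there. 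Carefully quantifying this — uniform bounds on the coefficients over the relevant compact range of $(\theta_1,\dots,\theta_{i-1})$, and the resulting explicit smallness condition on $\eps_i$ — is the technical heart of the argument; everything else is bookkeeping with Lemma~\ref{l:poly} and elementary estimates on $\sin^2\theta$.
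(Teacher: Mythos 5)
Your overall strategy is the paper's: invoke Lemma~\ref{l:poly} to get the single polynomial $g$, then choose the scales $\eps_1,\dots,\eps_n$ recursively, each sufficiently small in terms of $h$ evaluated at the earlier ones, so that on the whole box $\prod_i[s(h(\eps_i)),s(\eps_i)]$ the polynomial $g$ is dominated by one monomial and hence cannot vanish. Two comments on the execution. First, the opening detour through $s(\theta)\approx\theta^2$ and the lowest-degree homogeneous part $g_0$ is unnecessary and somewhat misleading: the $\eps_i$ end up at wildly different scales (each tiny compared with $h(\eps_{i-1})$), so there is no single normalisation $\theta_i=\eps_1 t_i$ with the $t_i$ in a fixed compact set, and no homogeneous limit to extract. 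The paper simply works with $s$ directly, conjugating $h$ to $\tilde h=s\circ h\circ s^{-1}$.

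Second, and more substantively, the place you label ``the technical heart'' does contain a real issue with the mechanism you propose for it: choosing the earlier $\eps_j$ ``generically'' so that the restricted polynomial is not identically zero only controls $g$ at a single point of the earlier coordinates, whereas you need the relevant leading coefficient to be bounded away from zero \emph{uniformly} as $(z_1,\dots,z_{i-1})$ ranges over the entire box $\prod_{j<i}[\tilde h(\delta_j),\delta_j]$; a nonzero polynomial being nonzero on a dense set does not give that. The fix is not genericity but iterating the domination itself, and the paper packages this cleanly: order the monomials of $g$ reverse-lexicographically (lowest power of $z_n$ first, then of $z_{n-1}$, \dots), let $z_1^{a_1}\cdots z_n^{a_n}$ be the leading term, and set $\delta_i=\min\bigl(\delta,\ \prod_{j<i}\tilde h(\delta_j)^{a_j}/C\bigr)$ with $C$ exceeding the sum of the absolute values of the other coefficients. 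Then for any point of the box and any other monomial $z_1^{b_1}\cdots z_n^{b_n}$ one has, with $k$ the largest index where $b_k>a_k$, the chain $z_1^{b_1}\cdots z_n^{b_n}\le z_k^{b_k}\cdots z_n^{b_n}\le (z_k/\prod_{j<k}z_j^{a_j})\,z_1^{a_1}\cdots z_n^{a_n}\le C^{-1}z_1^{a_1}\cdots z_n^{a_n}$, so the leading term beats the sum of all the rest everywhere on the box and $g\ne 0$ there, with no genericity needed. Your sketch would go through once you replace the genericity step by this (or an equivalent) uniform estimate.
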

\begin{proof}
  Let $g$ be the polynomial given by Lemma~\ref{l:poly}.  Consider the
  terms of~$g$ and order them reverse lexicographically so that the
  leading term has the lowest power of $z_n$, and among those, is
  the one with lowest power of $z_{n-1}$, etc.

  As above define $s\colon (0,\pi)\to (0,1]$ by $s(x)=\sin^2(x)$. Then
  $s$ is a bijection on $(0,\pi/2]$, and whenever we write $s^{-1}$ we
  mean the preimage in $(0,\pi/2]$. Let $\tilde h=s\circ h\circ s^{-1}$ and
  note that $\tilde h$ is strictly positive.

  Assume, without loss of generality, that the leading term is
  $z_1^{a_1}\dots z_n^{a_n}$ (with coefficient $1$). Let $C\ge 1$ be
larger than the sum of the absolute values of all the coefficients of
the other terms. Set $\delta=s(\eps)$ and
$\delta_1=\min(1/C,\delta)$. Inductively define
  \[
  \delta_i=\min\left(\delta,\ \frac{\prod_{j<i}\tilde h(\delta_j)^{a_j}}{C}\right).
  \]
  Then for any $z=(z_1,z_2,\dots, z_n)$ with $z_i\in
  [\tilde h(\delta_i),\delta_i]$ the leading term is larger than the sum of
  all the other terms.  Indeed, by our ordering of terms, for any
  other term $z_1^{b_1}\dots z_n^{b_n}$ there exists $k$ such that
  $b_k>a_k$ and $b_i=a_i$ for $i>k$. Thus, by the definition of
  $\delta_i$ and that $1/C<1$,
  \[
  z_1^{b_1}\dots z_n^{b_n}\le z_k^{b_k}\dots z_n^{b_n}\le
  \frac{z_k}{z_1^{a_1}\dots z_{k-1}^{a_{k-1}}}z_1^{a_1}\dots z_n^{a_n}
  \le \frac{1}{C}z_1^{a_1}\dots z_n^{a_n}.
  \]
  In particular this shows that $g(z)\not=0$.

  Define $\eps_i=s^{-1}(\delta_i)$. By definition $\delta_i\le \delta$
  so, since $s^{-1}$ is increasing, $\eps_i\le \eps$. Moreover, any
  configuration with angles $\theta_i\in [h(\eps_i),\eps_i]$ would
  have $s(\theta_i)\in [s\circ h(\eps_i),s(\eps_i)]=[\tilde h(\delta_i),\delta_i]$ and thus
  \[g(s(\theta_1),s(\theta_2),\dots,s(\theta_n))\not=0\] which would
  contradict Lemma~\ref{l:poly}. Therefore there cannot be a
  configuration realising all the angles $\theta_i$, $1\le i\le n$.
\end{proof}
We are now in a position to prove Theorem~\ref{t:main-hd}; we just
need to combine Lemma~\ref{l:project-angle} and
Theorem~\ref{t:2D-not-dense}.
\begin{proof}[Proof of Theorem~\ref{t:main-hd}]
  Recall that $m\ge 2$ and $n=2m-3$. Trivially, for $m=2$ there is
  nothing to prove so we may assume $m\ge 3$.  We start with a trivial
  observation: however we place $m$ points in Euclidean space they
  actually lie in an $(m-1)$-dimensional (affine) subspace. Thus, it
  is sufficient to prove that the angles cannot be realised in
  $\R^{m-1}$ or, as we shall prove, in $\R^m$.

  Let $\delta$ be such that $3\sqrt\delta+5m\delta <1/2n$ (for example
  $\delta=1/100n^2$ will do).  Fix $\eps<\pi/3$ and define
  $h(x)=\delta^2x$. Now apply Theorem~\ref{t:2D-not-dense} to get a
  sequence $\eps_i$ for $1\le i\le n$ such that for any sequence
  $\theta_i$ with $h(\eps_i)<\theta_i<\eps_i$ there is no arrangement
  of $m$ points in the plane realising these angles.

  Let $\phi_i=\eps_i\delta$. We claim that the set of angles
  $\phi_i$ is not realisable by $m$ points
  in $\R^m$.  Indeed, suppose that there is such an arrangement of $m$
  points realising these angles.  Consider a random two-dimensional
  projection $p$ of these points. With a slight abuse of notation for
  an angle $\phi$ we use $p(\phi)$ to denote the angle between the
  lines after the projection. By our choice of $h$ and
  Lemma~\ref{l:project-angle} we see that
  \[
  \Prb(p(\phi_i)\not\in [h(\eps_i),\eps_i]) = \Prb(p(\phi_i)\not\in
  [\phi_i \delta,\phi_i/\delta])<3\sqrt\delta+5m\delta <1/2n.
  \]
  Thus, with positive probability we have $p(\phi_i)\in
  [h(\eps_i),\eps_i]$ for all $i$. Fix such a projection $p$. The $m$
  points $p(z_i)$ in the plane realise the $2m-3$ angles $p(\phi_i)$
  which contradicts Theorem~\ref{t:2D-not-dense}.
\end{proof}

Finally, we remark that a slight adjustment to the proof above shows
not only that there is a set of $n$ angles that is not realisable, but
also that the set of $n$-tuples that are not realisable has non-empty
interior.

\section{Angles bounded away from $0$ and $\pi$}\label{s:away-from-zero}
In this section we prove Theorem~\ref{t:away-from-zero}. We remark
that the constraint on $\theta_i$ in Theorem~\ref{t:2D-not-dense} was
really a constraint on $\sin \theta_i$. Thus, our proof of
Theorem~\ref{t:main-hd} shows not only that we cannot realise sets
containing very small angles but, also, that we cannot realise sets
containing very large angles (i.e. close to $\pi$). Hence it is
natural to bound the angles away from $0$ and $\pi$ as we do in
Theorem~\ref{t:away-from-zero}.

First we show a local result: that for any $\theta\in(0,\pi)$ we can
realise angles near $\theta$ efficiently.
\begin{lemma}
  Given $d$ and $\theta$ there exists a finite collection of points
  $A$ in $\R^d$ and an open set $U\subset \R$ containing $\theta$ such
  that, for any $k$, we can realise any $dk$ distinct angles all in
  $U$ with $k$ points in $\R^d$ together with $A$.
\end{lemma}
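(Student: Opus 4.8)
The plan is to add one auxiliary point for each block of $d$ of the prescribed angles, all these auxiliary points being measured against the same fixed reference set $A$, and to control the $d$ angles at such a point by the inverse function theorem. Put $P_0$ at the origin of $\R^d$, let $e_1,\dots,e_d$ be the standard basis (indices read modulo $d$, so $e_{d+1}=e_1$), and for $1\le i\le d$ set
\[
A_i=\cos(\theta/2)\,e_i+\sin(\theta/2)\,e_{i+1},\qquad
B_i=\cos(\theta/2)\,e_i-\sin(\theta/2)\,e_{i+1}.
\]
These are unit vectors with $A_i\cdot B_i=\cos\theta$, so $A_i\widehat{P_0}B_i=\theta$. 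Let $A=\{A_1,B_1,\dots,A_d,B_d\}$ (the $A_i,B_i$ need not all be distinct, but this will not matter), and for $P$ in a small neighbourhood of $P_0$ define the smooth map
\[
F(P)=\bigl(A_1\widehat PB_1,\ A_2\widehat PB_2,\ \dots,\ A_d\widehat PB_d\bigr)\in\R^d.
\]
The crux of the argument is that $DF(P_0)$ is invertible; granting this, the inverse function theorem provides an open set $W\ni P_0$ in $\R^d$, which we may shrink so that it is disjoint from $A$, such that $F$ maps $W$ diffeomorphically onto an open set $V\ni(\theta,\dots,\theta)$, and we then fix an open interval $U\ni\theta$ with $U^d\subseteq V$.

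The one genuine computation is the differential of a subtended angle, and this is where I expect the work to lie. If $P_0$ is the origin and $M,N$ are fixed unit vectors with $M\cdot N=\cos\theta$, then differentiating $\cos(M\widehat PN)=\frac{(M-P)\cdot(N-P)}{\|M-P\|\,\|N-P\|}$ at $P=P_0$ gives
\[
\nabla_P\bigl(M\widehat PN\bigr)\big|_{P_0}=\tan(\theta/2)\,(M+N),
\]
a nonzero vector (as $0<\theta<\pi$) pointing along the bisector of the angle $M\widehat{P_0}N$. The point of the above choice of $A_i,B_i$ is precisely that $A_i+B_i=2\cos(\theta/2)\,e_i$, so applying the formula with $(M,N)=(A_i,B_i)$ yields $\nabla_P(A_i\widehat PB_i)|_{P_0}=2\sin(\theta/2)\,e_i$. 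Hence $DF(P_0)=2\sin(\theta/2)\,I$, which is invertible. (One can choose many other reference sets here; all that is needed is that the $d$ bisector directions form a basis of $\R^d$.)

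It remains to assemble the configuration. Given $k$ and any $dk$ distinct angles $\phi_1,\dots,\phi_{dk}\in U$, split them into consecutive blocks of size $d$, the $j$-th block being $\bigl(\phi_{(j-1)d+1},\dots,\phi_{jd}\bigr)$ for $1\le j\le k$. Each block lies in $U^d\subseteq V$, so there is a point $P_j\in W$ with $F(P_j)$ equal to the $j$-th block, i.e. $A_i\widehat{P_j}B_i=\phi_{(j-1)d+i}$ for every $i$. Since $W$ is disjoint from $A$ and $P_j$ lies near the origin while the $A_i,B_i$ lie on the unit sphere, the three points $A_i,P_j,B_i$ are distinct and non-collinear, so each $\phi_{(j-1)d+i}$ is genuinely realised; moreover the $P_j$ are pairwise distinct because the blocks are (all $dk$ angles being distinct). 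Thus $A\cup\{P_1,\dots,P_k\}$ is a configuration in $\R^d$ consisting of $A$ together with $k$ further points which realises all $dk$ of the given angles, as required.
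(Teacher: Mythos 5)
Your proof is correct and follows essentially the same strategy as the paper: fix a reference set $A$ consisting of $d$ pairs of points each subtending the angle $\theta$ at the origin, perturb the apex, and apply the inverse function theorem to the map sending the apex to the $d$ subtended angles. The only real difference is your choice of reference pairs: by taking each pair symmetric about $e_i$ you get the exact identity $\nabla_P(M\widehat PN)|_{P_0}=\tan(\theta/2)(M+N)$ and hence a Jacobian equal to $2\sin(\theta/2)I$, which is somewhat cleaner than the paper's construction, where the pairs are placed with one point at distance $\lambda$ and invertibility of the Jacobian is only obtained asymptotically as $\lambda\to\infty$.
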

  The idea is to choose our set $A$ such that it subtends $d$ angles
  of size $\theta$ at the origin. Then if we moved the
  point at the origin a small amount we could modify these $d$ angles
  slightly and use the inverse function theorem to show that these
  perturbations must include a small open set around $\theta$.
\begin{proof}
  We start by defining the set $A$.  Fix $\lambda$ very large and let
  $a=\lambda\cos \theta$ and $b=\frac{1}{\sqrt{d-1}}\lambda \sin
  \theta$. For $1\le j\le d$ define $f_j=\sum_i be_i+(a-b)e_j$ where
  $e_1,e_2,\dots, e_d$ denote the standard basis vectors of $\R^d$.
  Let $A$ be the set
  \[
  \{e_1,e_2,\dots,e_d\}\cup \{f_1,f_2,\dots,f_d\}.
  \] 
  Since 
  \[\cos(e_i\widehat 0f_i)=\frac{e_i\cdot
    f_i}{\|e_i\|\|f_i\|}=\frac{a}{\sqrt{a^2+(d-1)b^2}}=\frac{\lambda\cos
    \theta}{\lambda}=\cos \theta
  \]
  we see that, for each $1\le i\le d$ we have  $e_i\widehat 0f_i=\theta$.

  Let $x=(x_1,x_2,\dots,x_d)$ be a point near $0$ and let $F$ be the
  function mapping $x$ to the $d$-tuple $(\cos (e_1\widehat xf_1),\cos
  (e_2\widehat xf_2),\dots,\cos (e_d\widehat xf_d))$.  Obviously the
  function $f$ is continuously differentiable near zero. Thus, to
  complete the proof we just need to show that the derivative of $f$
  is non-singular at $0$.

We have 
\begin{align*}
 \left.\frac{\partial }{\partial x_j}\cos\theta_i\right\vert_{x=0}
&=\left.\frac{\partial }{\partial x_j}\left(\frac{f_i-x}{\|f_i-x\|}\cdot\frac{e_i-x}{\|e_i-x\|}\right)\right\vert_{x=0}\\
&=\left.\frac{\partial }{\partial x_j} \frac{f_i-x}{\|f_i-x\|}\right\vert_{x=0}\cdot \frac{e_i-x}{\|e_i-x\|}+
\frac{f_i-x}{\|f_i-x\|}\cdot \left.\frac{\partial }{\partial x_j} \frac{e_i-x}{\|e_i-x\|}\right\vert_{x=0}\\
&=\left.\frac{\partial }{\partial x_j} \frac{f_i-x}{\|f_i-x\|}\right\vert_{x=0}\cdot e_i+
\frac{f_i}{\|f_i\|}\cdot \left.\frac{\partial }{\partial x_j} \frac{e_i-x}{\|e_i-x\|}\right\vert_{x=0}.
\end{align*}
Now \[\left\|\frac{\partial }{\partial x_j}
\frac{f_i-x}{\|f_i-x\|}\right\|=O(1/\lambda)\] and
\[\frac{\partial }{\partial x_j}
\frac{e_i-x}{\|e_i-x\|}=\begin{cases}0\qquad&\text{if $i=j$}\\
e_j\qquad&\text{otherwise}
\end{cases}\]
Thus
\[
\frac{\partial }{\partial x_j}\cos\theta_i=\begin{cases}O(1/\lambda)\qquad&\text{if $i=j$}\\
\frac{b}{\lambda}+O(1/\lambda)\qquad&\text{otherwise}.
\end{cases}
\]
Hence the derivative matrix of $F$ is 
\[\frac{b}{\lambda} (J-I)+O(1/\lambda)=\frac{\sin \theta}{\sqrt{d-1}}(J-I)+O(1/\lambda)
\]
(where $J$ is the all one matrix), and is thus invertible provided
$\lambda$ is sufficiently large. The result follows.
\end{proof}

Finally we prove Theorem~\ref{t:away-from-zero}.
\begin{proof}[Proof of Theorem~\ref{t:away-from-zero}]
  First we consider the case where all angles are distinct.

  For each $\phi\in [\eps,\pi-\eps]$ let $U_\phi$ and $A_\phi$ be the
  open neighbourhoods and sets of points given by the previous
  lemma. The $U_\phi$ form an open cover of $[\eps,\pi-\eps]$. Let
  $U_{\phi_1},U_{\phi_2},\dots,U_{\phi_k}$ be a finite subcover and
  let $A$ be the union of the corresponding $A_{\phi_i}$. Note, in
  particular, that $A$ is a finite set.

  We claim that we can realise any collection of $n$ angles
  $\theta_1,\theta_2,\dots,\theta_n$ with $n/d+k+|A|$ points. First we
  partition the angles $\theta_1,\theta_2,\dots,\theta_n$ into sets
  $\Phi_i$ such that all angles in $\Phi_i$ are in $U_{\phi_i}$.  Now,
  we place all the points in $A$ and then, for each $i$, use the
  previous lemma to realise all the angles in $\Phi_i$ using
  $\Big\lceil|\Phi_i|/d\Big\rceil$ more points. In total this uses
  (at most) $|A|+k+n/d$ points. This rearranges to the claimed bound.

  Now, suppose that there are some repeated angles. If any angle
  occurs more than $2d(2d+1)$ times then, by Lemma~\ref{l:eq-angles},
  we can realise it using $2(2d+1)$ points. Repeating this argument
  reduces to the case where no angle occurs more than $2d(2d+1)$
  times. This final case is easy: we take $2d(2d+1)$ copies of the
  above construction. Thus, we get a bound of $2d(2d+1)(|A|+k)+n/d$ as
  required.
\end{proof}

\section{Open Problems}

It is clear that some sets of more than $2m-4$ angles are realisable
by $m$ points in $\R^d$. Our main question asks how common this is.

To formalise this suppose that $d\ge 2$, $m\ge 2$ and $n\le
dm-\binom{d+1}{2}-1$, and that $\theta_1,\theta_2,\dots,\theta_n$ are
$n$ angles chosen uniformly from $(0,\pi)$. Let $P(d,m,n)$ be the
probability that the angles $\theta_1,\theta_2,\dots,\theta_n$ are
realisable with $m$ points in $\R^d$. Then our question becomes `how
does $P$ behave?'

We start with the case where $d$ is fixed.
\begin{question}
  Suppose $d$ is fixed and $n=dm-\binom{d+1}{2}-1$. Does
  $P(d,m,n)\to1$ as $m\to \infty$? More weakly, is $\liminf P(d,m,n)>0$? 
\end{question}
We believe that this probability does tend to 1. The situation is less
clear when $d$ is allowed to vary. We ask about the case when $m=d+1$
(i.e., the minimal number of points to actually use $d$ dimensions).
\begin{question}
  Suppose $m=d+1$ and $n=dm-\binom{d+1}{2}-1$. Does $P(d,m,n)\to 1$ as
  $d\to \infty$?
\end{question}

In the previous section we considered the case of angles bounded away
from $0$ and $\pi$. Our lower bound there was any $n$ angles can be
realised with $m$ points provided that $n\le dm+c$. The degrees of
freedom upper bound (i.e. the higher dimensional analogue of
Corollary~\ref{c:2m-3}) shows that we cannot hope to realise more than
$n=dm-\binom{d+1}{2}-1$ in general. These bounds have the same order
but differ by a constant.

\begin{question}
  Suppose that $\eps$ and $d$ are fixed. Is it possible to realise an
  arbitrary set of $n=dm-\binom{d+1}{2}-1$ angles all between $\eps$
  and $\pi-\eps$ with $m$ points for all sufficiently large $m$? 
\end{question}
Of course, if the answer to this is negative then one would like to
determine the correct value of the constant.

Our final question asks about the number of different ways that a
set of angles can be realised. We consider two similar point sets
(i.e, two point sets related by a translation, rotation, reflection or
scaling) as the same.
\begin{question}
  Suppose that $\theta_1,\theta_2,\dots,\theta_n$ are $n$ angles and
  suppose that $m$ is such that $n=2m-4$. How many different $m$-point
  sets are there realising these angles?
\end{question}
By Theorem~\ref{t:2d-opt} we know that there is at least one $m$-point
configuration realising these angles. By the remark following
Lemma~\ref{l:extend-2d} we see that there are at least $\Omega(k)$
chords we could use to realise the $k^\textrm{th}$ angle. This gives a
lower bound of roughly the order of $m!$ on the number of configurations. 

In the other direction, Lemma~\ref{l:poly} shows that, for almost all
tuples $(\theta_1,\theta_2,\dots,\theta_n)$, there are only finitely
many other angles that can occur in configurations realising these
angles. This shows that the number of such configurations is almost
surely finite.  By considering the possible arrangements of the points
one can check that $m^{10m}$ is an upper bound. But we do not know the
correct order.

 \bibliography{mybib}{}

\begin{thebibliography}{10}

\bibitem{MR2163782}
P.~Brass, W.~Moser, and J.~Pach.
\newblock {\em Research problems in discrete geometry}.
\newblock Springer, New York, 2005.

\bibitem{MR527745}
J.~H. Conway, H.~T. Croft, P.~Erd{\H{o}}s, and M.~J.~T. Guy.
\newblock On the distribution of values of angles determined by coplanar
  points.
\newblock {\em J. London Math. Soc. (2)}, 19(1):137--143, 1979.

\bibitem{MR0138040}
L.~Danzer and B.~Gr{\"u}nbaum.
\newblock \"{U}ber zwei {P}robleme bez\"uglich konvexer {K}\"orper von {P}.
  {E}rd{\H o}s und von {V}. {L}. {K}lee.
\newblock {\em Math. Z.}, 79:95--99, 1962.

\bibitem{MR0015796}
P.~Erd\H{o}s.
\newblock On sets of distances of {$n$} points.
\newblock {\em Amer. Math. Monthly}, 53:248--250, 1946.

\bibitem{MR1531563}
P.~Erd\H{o}s.
\newblock Advanced {P}roblems and {S}olutions: {S}olutions: 4306.
\newblock {\em Amer. Math. Monthly}, 69(2):173, 1962.

\bibitem{MR841305}
P.~Erd{\H{o}}s and Z.~F{\"u}redi.
\newblock The greatest angle among {$n$} points in the {$d$}-dimensional
  {E}uclidean space.
\newblock In {\em Combinatorial mathematics ({M}arseille-{L}uminy, 1981)},
  volume~75 of {\em North-Holland Math. Stud.}, pages 275--283. North-Holland,
  Amsterdam, 1983.

\bibitem{FurSzi}
Z.~F\"uredi and G.~Szigeti.
\newblock Four points and four angles in the plane.
\newblock submitted.

\bibitem{Garling-Galois}
D.~Garling.
\newblock {\em A Course in Galois Theory}.
\newblock Cambridge University Press, 1986.

\bibitem{MR3272924}
L.~Guth and N.~H. Katz.
\newblock On the {E}rd{\H o}s distinct distances problem in the plane.
\newblock {\em Ann. of Math. (2)}, 181(1):155--190, 2015.

\bibitem{MR2837593}
V.~Harangi.
\newblock Acute sets in {E}uclidean spaces.
\newblock {\em SIAM J. Discrete Math.}, 25(3):1212--1229, 2011.

\bibitem{hopf-pannwitz}
H.~Hopf and E.~Pannwitz.
\newblock Aufgabe nr. 167.
\newblock {\em Jahresbericht Deutsch. Math.-Verein}, 43:114, 1934.

\bibitem{MR0046663}
L.~Moser.
\newblock On the different distances determined by {$n$} points.
\newblock {\em Amer. Math. Monthly}, 59:85--91, 1952.

\end{thebibliography}
\bibliographystyle{abbrv}

\end{document}